\newtheorem{theorem}{Theorem}[section]
\newtheorem{lemma}{Lemma}[section]
\newtheorem{algorithm}{Algorithm}[section]
 \numberwithin{equation}{section}
\begin{document}
\title{ Postprocessing and Higher Order Convergence of Stabilized Finite Element
Discretizations of the Stokes Eigenvalue Problem}
 \author{Hehu Xie\footnote{LSEC, Academy of Mathematics and Systems Science,
 Chinese Academy of Sciences, Beijing 100080, China and Institute
 for Analysis and Computational Mathematics, Otto-von-Guericke-University
 Magdeburg, Postfach 4120, D-39016 Magdeburg, Germany
 (hhxie@lsec.cc.ac.cn).}}
\date{}
\maketitle
\begin{abstract}
In this paper, the stabilized finite element method based on local projection
is applied to discretize the Stokes eigenvalue problems and the corresponding convergence analysis
is given. Furthermore, we also use a method to improve the
convergence rate for the eigenpair approximations of
the Stokes eigenvalue problem. It is based on a postprocessing
strategy that contains solving an additional Stokes source problem
on an augmented finite element space which can be constructed either
by refining the mesh or by using the same mesh but increasing the
order of mixed finite element space. Numerical examples are given to confirm
the theoretical analysis.

\vskip0.3cm {\bf Keywords.} Stokes eigenvalue problem, finite
element method, local projection stabilization, Rayleigh quotient
formula, postprocessing, two-grid, two spaces

\vskip0.1cm {\bf AMS subject classifications.} 65N30, 65N25, 65L15,
65B99
\end{abstract}

\section{Introduction}
In this paper, we are concerned with the Stokes eigenvalue problems.
The study of Stokes eigenmodes is required when the dynamics behaviors governed
 by the Navier-Stokes equations result from the way this nonlinear dynamics is controlled by diffusion.
 For the other reasons to study the Stokes eigenmodes,
please read the papers \cite{BatchoKarniadakis, LerricheLabrosse}.

The Stokes eigenvalue problem reads as follows:

Find $({\mathbf u},p, \lambda)$ such that
\begin{equation}\label{stokesproblem}
\left\{
\begin{array}{rcl}
-\Delta{\mathbf u}+\nabla p&=&\lambda{\mathbf u}\ \ \ {\rm in}\ \Omega,\\
\nabla\cdot{\mathbf u}&=&0\ \ \ \ \ {\rm in}\ \Omega,\\
{\mathbf u}&=&0\ \ \ \ \ {\rm on}\ \partial\Omega,\\
\int_{\Omega}{\mathbf u}^2d\Omega&=&1,
 \end{array} \right.
\end{equation}
where $\Omega\subset\mathbb{R}^2$ is a bounded domain with Lipschitz
boundary $\partial\Omega$ and $\Delta$, $\nabla$, $\nabla\cdot$\
denote the Laplacian, gradient and divergence operators,
respectively.

There are several works for the eigenvalue problems and their
numerical methods such as Babu\v{s}ka and Osborn
\cite{BabuskaOsborn1989, BabuskaOsborn, Osborn},
% $\textrm{K}\check{\textrm{r}}\acute{\textrm{\scriptsize
% I}}\check{\textrm{z}}\textrm{ek}$ \cite{Krizek},
Mercier, Osborn, Rappaz and Raviart
\cite{MercierOsbornRappazRaviart}, etc.
 Osborn \cite{Osborn}, Mercier, Osborn, Rappaz and Raviart
\cite{MercierOsbornRappazRaviart} give an abstract analysis for the
eigenpair approximations by mixed/hybrid finite element methods
based on the general theory of compact operators (\cite{Chatelin}).
 In \cite{HeuvelineRannacher} and \cite{LovadinaLylyStenberg}, a posteriori
error estimates and the corresponding adaptive finite element
methods are given for the Stokes eigenvalue problems.

 The first aim  in this paper is to use the local projection stabilization method to discretize
the Stokes eigenvalue problems.  The local projection stabilization
(LPS) method has been proposed for the Stokes problem in
\cite{BeckerBraack_stokes}. The extension to the transport problem
was given in \cite{BeckerBraack_transport}. The analysis of the
local projection method applied to equal-order interpolation
discretization can be found \cite{MatthiesSkrzypaczTobiska_Oseen}
for Oseen problem and in \cite{MatthiesSkrzypaczTobiska_CDR} for
convection-diffusion problem.\par The stabilization term of the
local projection method is based on a projection $\pi_h: V_h\to
\mathcal{D}_h$ of the finite element space $V_h$ which approximates
the solution into a discontinuous space $\mathcal{D}_h$. The
standard Galerkin discretization is stabilized by adding a term
which gives $L^2$ control over the fluctuation $id-\pi_h$ of the
gradient of the solution. Here, the LPS method is based on the
approximation space $V_h$ and the projection space $\mathcal{D}_h$
are defined on the same mesh. In this case, the approximation space
$V_h$ is enriched to satisfy the local inf-sup condition
guaranteeing the existence of an interpolation with an orthogonal
property compared to standard finite
 element spaces. For more details, please read the book \cite{RoosStynesTobiska}.

Recently, many effective postprocessing methods that improve the
convergence rate for the approximations of the eigenvalue problems
by the finite element methods have been proposed and analyzed
(\cite{AndreevLazarovRacheva, RachevaAndreev, XuZhou}).  Xu and Zhou
\cite{XuZhou} have given a two-grid discretization technique to
improve the convergence rate of the second order elliptic eigenvalue
problems and integral eigenvalue problems. Racheva and Andreev
\cite{RachevaAndreev}, Andreev, Lazarov and Racheva
\cite{AndreevLazarovRacheva} have proposed a postprocessing method
that improve the convergence rate for the numerical approximations
of $2m$-order selfadjoint eigenvalue problems especially biharmonic
eigenvalue problems. In \cite{ChenJiaXIe}, a similar method has been
given for the Stokes eigenvalue problem by mixed finite element
methods. The second aim of this paper is to propose and analyze a
postprocessing algorithm which can improve the convergence rate of
the eigenpair approximations for the Stokes eigenvalue problem by
the LPS method.

The postprocessing procedure can be described as follows: (1)\ solve
the Stokes eigenvalue problem in the original finite element space;
(2)\ solve an additional Stokes source problem in an augmented space
using the previous obtained eigenvalue multiplying the corresponding
eigenfunction as the load vector. This
 method can improve the convergence rate of the eigenpair
 approximations with relative inexpensive computation because we replace the solution of the
 eigenvalue problem by an additional source problem  on a finer mesh or in a higher order
 finite element space.

An outline of the paper goes as follows. In Section 2, we introduce
the application of LPS method for Stokes eigenvalue problem.
%The corresponding error estimate is also given. % the application of LPS method
%for the Stokes eigenvalue problems.
 The corresponding error estimate is given in section 3.
 Section 4 is devoted to deriving the postprocessing technique and analyze its
efficiency. In Section 5, we propose a practical computational
algorithm to implement the postprocessing method. In Section 6, we
give two numerical results to confirm the theoretical analysis. Some
concluding remarks are given in the last section.

\section{Discretizations of the Stokes eigenvalue problem}
In this paper, we use the standard notations (\cite{BrennerScott,
BrezziFortin, Ciarlet}) for the Sobolev spaces $H^m(\Omega)$
(standard interpolation spaces for real number $m$) and their
associated inner products $(\cdot,\cdot)_m$, norms $\|\cdot\|_m$ and
seminorms $|\cdot |_m$ for $m\geq0$. The Sobolev space $H^0(\Omega)$
coincides with $L^2(\Omega)$, in which case the norm and inner
product are denoted by $\|\cdot\|$ and $(\cdot,\cdot)$,
respectively. In addition, denoted by $L^2_0(\Omega)$ the subspace
of $L^2(\Omega)$ that consists of functions on $L^2(\Omega)$ having
mean value zero. We also use the vector valued functions
$(H^m(\Omega))^2$ just as \cite{BrezziFortin} and
\cite{GiraultRaviart}.

The corresponding weak form of (\ref{stokesproblem}) is:

Find $({\bf u}, p,\lambda)\in {\bf V}\times Q\times\mathbb{R}$ such
that $r({\mathbf u}, {\mathbf u})=1$ and
\begin{equation}\label{wstokesproblem}
\left\{ \begin{array}{rcl}
 a({\mathbf u},{\mathbf v})-b({\mathbf v}, p)&=&\lambda r({\mathbf u}, {\mathbf v})\ \ \
 \forall {\mathbf v}\in {\bf V},\\
 b({\mathbf u}, q)&=&0\ \ \ \ \ \ \ \ \ \ \ \ \forall q\in Q,
\end{array}\right.
\end{equation}
where ${\bf V}=(H_0^1(\Omega))^2$, $Q=L_0^2(\Omega)$ and
\begin{eqnarray*}
a({\mathbf u}, {\mathbf v})&=&\int_{\Omega}\nabla{\mathbf
u}\nabla{\bf
v}d\Omega,\\
b({\mathbf v}, p)&=&\int_{\Omega}\nabla\cdot{\mathbf v}pd\Omega,\\
r({\mathbf u}, {\mathbf v})&=&\int_{\Omega}{\mathbf u}{\bf
v}d\Omega.
\end{eqnarray*}
From \cite{BabuskaOsborn},  we know eigenvalue problem
(\ref{wstokesproblem}) has an eigenvalue sequence $\{\lambda_j\}$:
$$0<\lambda_1\leq\lambda_2\leq \cdots \leq\lambda_k\leq\cdots, \ \ \lim_{k\rightarrow\infty}\lambda_k=\infty,$$
and the associated eigenfunctions
$$({\mathbf u}_1, p_1), ({\mathbf u}_2, p_2), \cdots, ({\mathbf u}_k, p_k), \cdots,$$
where $r({\mathbf u}_i, {\mathbf u}_j)=\delta_{ij}$.

For the aim of analysis, we define the bilinear form as 
\begin{eqnarray}\label{bilinear_A}
A((\mathbf u,p);(\mathbf v,q))&=&a(\mathbf u,\mathbf v)-b(\mathbf v,p)+b(\mathbf u,q).
\end{eqnarray}
For simplicity, we only consider the simple eigenvalues in this
paper. We know that $a(\cdot,\cdot), b(\cdot,\cdot)$ and
$s(\cdot,\cdot)$ have the following properties
(\cite{GiraultRaviart}):
\begin{align}
a({\mathbf u},{\mathbf v})&\leq \|\mathbf u\|_1\|\mathbf v\|_1,\label{2.2}\\
a({\mathbf u}, {\mathbf u})&\geq C\|\mathbf u\|_1^2,\\
r({\mathbf u}, {\mathbf v})&\leq C\|{\mathbf u}\|_0\|{\mathbf v}\|_0,\\
r({\mathbf u}, {\mathbf u})&\geq C\|{\mathbf u}\|_0^2,\\
 \sup_{0\neq{\bf
v}\in{\bf V}}\frac{b({\mathbf v},q)}{\|{\bf
v}\|_1}&\geq C\|q\|_0,\\
\|{\mathbf u}\|_1+\|p\|_0&\leq C\sup_{0\neq({\mathbf v},q)\in{\bf
V}\times Q}\frac{A((\mathbf u,p);(\mathbf v,q))}{\|{\mathbf v}\|_1+\|q\|_0},\label{2.5}
\end{align}
where $C>0$. In this paper, $C$ denotes constant independent of the
mesh size $h$ and sometimes depends on the eigenvalue $\lambda$ and
may be different values at its different occurrence.

For the eigenvalue, there exists the following Rayleigh quotient
expression
\begin{align}
\lambda=\frac{a({\mathbf u}, {\mathbf u})}{r({\mathbf u}, {\bf
u})}.
\end{align}

\subsection{Local projection stabilization}
In this section, we consider equal order interpolations stabilized
by the local projection method in its one-level variant as developed
in \cite{GanesanMatthiesTobiska, MatthiesSkrzypaczTobiska_Oseen}.
For the two-level approach we refer to
\cite{BeckerBraack_stokes,BB06,NW09}. Let $V_h$ denote a scalar
finite element space of continuous, piecewise polynomials over
$\mathcal{T}_h$. The spaces for approximating velocity and pressure
are given by $\mathbf V_h:=V_h^2\cap \mathbf V$ and $Q_h:=V_h\cap
Q$. The discrete problem of our stabilized method is:
\medskip

 Find $(\mathbf u_h,p_h)\in \mathbf V_h\times Q_h$ such that
\begin{equation}\label{dwstokesproblem}
\left\{ \begin{array}{rcl}
a({\mathbf u}_h, {\mathbf v})-b({\mathbf
v}, p_h)&=&\lambda_h r({\mathbf u}_h,{\mathbf v})\ \ \ \ \forall{\bf
v}\in {\bf
V}_h,\\
b({\mathbf u}_h,q)+S_h(p_h,q_h)&=&0\ \ \ \ \ \ \ \ \ \quad\quad\
\forall q\in Q_h,
\end{array}\right.
\end{equation}
where the stabilization term with user-chosen parameters $\alpha_K$
is given by
\begin{align}\label{stabilization_term}
S_h(p,q)=\sum_{K\in\mathcal{T}_h}\alpha_K(\kappa_h\nabla
p,\kappa_h\nabla q)_K.
\end{align}
Here, the fluctuation operator $\kappa_h:L^2(\Omega)\rightarrow
L^2(\Omega)$ acting componentwise is defined as follows. Let
$P_s(K)$ denote the set of all polynomials of degree less than or
equal to $s$ and let $D_h(K)$ be a finite dimensional space on the
cell $K\in{\mathcal T}_h$  with $P_s(K)\subset D_h(K)$. We extend
the definition by allowing $P_{-1}(K)=D_h(K)=\{0\}$. We introduce
the associated global space of discontinuous finite elements
$$D_h:=\bigoplus_{K\in{\mathcal T}_h}D_h(K)$$
and the local $L^2(K)$-projection $\pi_K:L^2(K)\rightarrow D_h(K)$
generating the global projection $\pi_h:L^2(\Omega)\rightarrow D_h$
by
$$ (\pi_hw)\big|_K:=\pi_K(w|_K)\qquad\forall K\in{\mathcal T}_h,\; \forall
w\in L^2(\Omega).$$ The fluctuation operator
$\kappa_h:L^2(\Omega)\rightarrow L^2(\Omega)$ used in
\eqref{stabilization_term} is given by $\kappa_h:=id-\pi_h$ where
$id:L^2(\Omega)\rightarrow L^2(\Omega)$ is the identity on
$L^2(\Omega)$.

In order to study the convergence properties of this method for Stokes eigenvalue problem, we
introduce the bilinear form
\begin{align}\label{bilinear}
A_h((\mathbf u,p);(\mathbf v,q))&=(\nabla \mathbf u,\nabla\mathbf
v)-(p,\text{div }\mathbf v)+(q,\text{div }\mathbf u)+S_h(p,q).
\end{align}
and the mesh-dependent norm
\begin{align}\label{mesh-norm}
|||(\mathbf v,q)|||_A&:=\Bigl(|\mathbf v|_1^2+\|q\|_0^2
+\sum_{K\in\mathcal{T}_h}\alpha_K\|\kappa_h\nabla
q\|_{0,K}^2\Bigr)^{1/2}.
\end{align}

The existence and uniqueness of discrete solutions of Stokes problem
have been studied in \cite{MatthiesSkrzypaczTobiska_Oseen,
GanesanMatthiesTobiska} for different pairs $(V_h,D_h)$ of
approximation and projection spaces, respectively. Based on these
results, the existence and uniqueness of eigenvalue problem
(\ref{dwstokesproblem}) can be given similarly.

The stability and convergence properties of the LPS method (\ref{dwstokesproblem})
need the following assumptions(\cite{MatthiesSkrzypaczTobiska_Oseen, RoosStynesTobiska}).\\
\textbf{Assumption A1:} There is an interpolation operator
$i_h: H^2(\Omega)\to V_h$ such that
\begin{equation}\label{assumption_a1}
\|v-i_hv\|_{0,K}+h_K|v-i_hv|_{1,K} \le C h_k^l\|v\|_{l,\omega(K)} \ \
\end{equation}
for all $ K\in \mathcal T_h$,  $v\in H^l(\omega(K))$ and $ 1\le l \le k+1$, where $\omega(K)$
denotes a certain local neighborhood of $K$ which appears in the definition of these interpolation
 operators for non-smooth functions; see \cite{Clement, ScottZhang} for more details.\\
 \textbf{Assumption A2:} The fluctuation operator $\kappa_h$ satisfy the following approximation property
\begin{equation}\label{assumption_a2}
\|\kappa_h q\|_{0,K}\le C h^l_K|q|_{l,K} \ \ \ \forall K\in \mathcal
T_h, \:\forall q\in H^l(K), \:0\le l \le k.
\end{equation}\\
\textbf{Assumption A3:} There exists a constant $\beta_1>0$  such that for all $h>0$
\begin{equation}\label{assumption_a3}
\inf_{{q_h\in\mathcal D_h(K)}}\sup_{{v_h\in V_h(K)}} \frac{(v_h,
q_h)}{\|v_h\|_{0,K} \:\|q_h\|_{0,K}}\ge \beta_1>0
\end{equation}
is satisfied where $V_h(K) =\{v_h|_K: v_h\in V_h,\ v_h=0\ {\rm in}\ \Omega\backslash K \}$.

The assumption A1 and A3 guarantee the existence of an interpolant with the usual interpolation properties
(\ref{assumption_a1}) and the orthogonality
\begin{equation}\label{orthogonality_condition}
(v-j_hv, q_h) = 0\ \  \ \ \forall q_h\in \mathcal D_h, \forall v\in
H^2(\Omega),
\end{equation}
whereas A2 is needed to bound the consistency error \cite{RoosStynesTobiska}.
For example, in the one-level LPS assumption A1 and A2
are satisfied if we choose $(V_h,D_h)=(P_k, P_{k-1}^{\rm disc})$ continuous and discontinuous,
piecewise polynomials of
degree $r$ and $r-1$, respectively. In order to guarantee A3,  $V_h$ need to be enriched
by suitable bubble functions.
For more details about LPS method, please read the papers
\cite{MatthiesSkrzypaczTobiska_Oseen, MatthiesSkrzypaczTobiska_CDR} and the book \cite{RoosStynesTobiska}.

% \begin{lemma}(\cite[Theorem 2.2]{MatthiesSkrzypaczTobiska_Oseen},\cite[Theorem 3.71]
%{RoosStynesTobiska})\label{special_interpolation_estimates} Under the Assumptions A1 and A3
%there exists an interpolation operator $ j_h: H^2(\Omega)\to V_h$ such that the following
%orthogonality and approximation properties hold
% \begin{equation}\label{orthogonality_condition}
% (v-j_hv, q_h) = 0\ \  \ \ \forall q_h\in \mathcal D_h, \forall v\in
% H^2(\Omega),
% \end{equation}
% and
% \begin{align}
% &&\|v-j_hv\|_{0,K}+h_K|v-j_hv|_{1,K}\le Ch^l_K\|v\|_{l,K} \nonumber\\
% &&\hskip 3cm\forall K\in \mathcal T_h,\: \forall v\in H^l(\Omega),\:
% 2\le l \le r+1.
% \end{align}
% \end{lemma}

% \begin{itemize}
% \item $Y_h:=\{v\in H^1(\Omega): v|_K\in P_1(K),\;
% \forall K\in {\mathcal T}_h\}$,
%       $D_h:=\{0\}$,
% \item $Y_h:=\{v\in H^1(\Omega): v|_K\in P_k^+(K),\;
% \forall K\in {\mathcal T}_h\}$,\\
%       $D_h:=\{ q\in L^2(\Omega): q|_K\in P_{k-1}(K),\;
% \forall K\in {\mathcal T}_h\}$,
% \end{itemize}
% where $P_k^+(K)$ is the space of piecewise polynomial functions of
% degree $k$ enriched by bubble part vanishing on the boundary of
% $K$(\cite{GanesanMatthiesTobiska}).

\begin{lemma}(\cite{GanesanMatthiesTobiska})\label{BB_Condition}
Let the assumption A1, A3, and $\alpha_K\sim h_K^2$ be fulfilled. Then, there is a positive constant
$\beta_A$ independent of $h$ such that
\begin{align}
\inf_{(\mathbf v_h,q_h)\in \mathbf V_h\times Q_h} \sup_{(\mathbf
w_h,r_h)\in \mathbf V_h\times Q_h} \frac{A_h\big((\mathbf
v_h,q_h);(\mathbf w_h,r_h)\big)}{|||(\mathbf
  v_h,q_h)|||_A\; |||(\mathbf w_h,r_h)|||_A}\ge \beta_A>0
\end{align}
holds.
\end{lemma}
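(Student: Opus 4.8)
The plan is to verify the inf-sup condition by constructing, for each given $(\mathbf v_h,q_h)\in \mathbf V_h\times Q_h$, a suitable test pair $(\mathbf w_h,r_h)$ for which $A_h\big((\mathbf v_h,q_h);(\mathbf w_h,r_h)\big)$ bounds $|||(\mathbf v_h,q_h)|||_A^2$ from below while $|||(\mathbf w_h,r_h)|||_A$ is bounded from above by $|||(\mathbf v_h,q_h)|||_A$. The test pair will be a combination $(\mathbf v_h-\delta\mathbf z_h,\,q_h)$ of the diagonal choice $(\mathbf v_h,q_h)$ and a pressure-recovery field $\mathbf z_h$, with $\delta>0$ a small parameter to be fixed. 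The diagonal choice is treated first: because the off-diagonal terms $-(q_h,\text{div }\mathbf v_h)$ and $+(q_h,\text{div }\mathbf v_h)$ cancel, one gets immediately
\[
A_h\big((\mathbf v_h,q_h);(\mathbf v_h,q_h)\big)=|\mathbf v_h|_1^2+\sum_{K\in\mathcal T_h}\alpha_K\|\kappa_h\nabla q_h\|_{0,K}^2 ,
\]
which captures two of the three terms of $|||\cdot|||_A$ but provides no control over $\|q_h\|_0$.

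To recover $\|q_h\|_0$ I would invoke the continuous inf-sup property of $b(\cdot,\cdot)$ from Section 2 to pick $\mathbf w\in\mathbf V$ with $\text{div }\mathbf w=q_h$ and $\|\mathbf w\|_1\le C\|q_h\|_0$, and set $\mathbf z_h:=j_h\mathbf w\in\mathbf V_h$, the special interpolant guaranteed by Assumptions A1 and A3, which satisfies the orthogonality \eqref{orthogonality_condition} and the approximation estimate \eqref{assumption_a1}. Testing against $(-\mathbf z_h,0)$ gives
\[
A_h\big((\mathbf v_h,q_h);(-\mathbf z_h,0)\big)=-(\nabla\mathbf v_h,\nabla\mathbf z_h)+(q_h,\text{div }\mathbf z_h),
\]
and the crux is to show that $(q_h,\text{div }\mathbf z_h)$ is close to $\|q_h\|_0^2$. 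Writing $(q_h,\text{div }\mathbf z_h)=\|q_h\|_0^2-(q_h,\text{div}(\mathbf w-\mathbf z_h))$ and integrating by parts turns the defect into $(\nabla q_h,\mathbf w-\mathbf z_h)$; decomposing $\nabla q_h=\pi_h\nabla q_h+\kappa_h\nabla q_h$, the orthogonality \eqref{orthogonality_condition} kills the $\pi_h\nabla q_h$ part and leaves only $(\kappa_h\nabla q_h,\mathbf w-\mathbf z_h)$. A cellwise Cauchy-Schwarz bound together with $\|\mathbf w-\mathbf z_h\|_{0,K}\le Ch_K\|\mathbf w\|_{1,\omega(K)}$ from A1 and the scaling $\alpha_K\sim h_K^2$ then yields
\[
\big|(q_h,\text{div}(\mathbf w-\mathbf z_h))\big|\le C\Big(\sum_{K\in\mathcal T_h}\alpha_K\|\kappa_h\nabla q_h\|_{0,K}^2\Big)^{1/2}\|q_h\|_0 .
\]
Combined with $|(\nabla\mathbf v_h,\nabla\mathbf z_h)|\le C|\mathbf v_h|_1\|q_h\|_0$ and Young's inequality, this gives a lower bound of the form $\tfrac12\|q_h\|_0^2-C'\big(|\mathbf v_h|_1^2+\sum_{K}\alpha_K\|\kappa_h\nabla q_h\|_{0,K}^2\big)$.

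I would then add the two estimates with weight $\delta$, i.e.\ evaluate $A_h$ at $(\mathbf v_h-\delta\mathbf z_h,\,q_h)$, and choose $\delta>0$ small enough that the factor $1-\delta C'$ in front of $|\mathbf v_h|_1^2+\sum_K\alpha_K\|\kappa_h\nabla q_h\|_{0,K}^2$ stays bounded below, which produces $A_h\big((\mathbf v_h,q_h);(\mathbf v_h-\delta\mathbf z_h,q_h)\big)\ge c\,|||(\mathbf v_h,q_h)|||_A^2$. The $H^1$-stability of the interpolant, $|\mathbf z_h|_1\le C\|\mathbf w\|_1\le C\|q_h\|_0$, then gives $|||(\mathbf v_h-\delta\mathbf z_h,q_h)|||_A\le C|||(\mathbf v_h,q_h)|||_A$, and dividing yields the claim with $\beta_A=c/C$. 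I expect the main obstacle to be the defect term $(q_h,\text{div}(\mathbf w-\mathbf z_h))$: a plain interpolation estimate would control it only by an $O(1)$ multiple of $\|q_h\|_0^2$ and destroy the coercivity, so the argument hinges on using the orthogonality \eqref{orthogonality_condition} to discard the non-fluctuating part of $\nabla q_h$ and on the matching choice $\alpha_K\sim h_K^2$ to absorb the remainder into the stabilization part of the norm.
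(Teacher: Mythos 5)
Your proposal is correct and follows essentially the same route as the paper's source for this result: the paper does not prove Lemma \ref{BB_Condition} itself but quotes it from \cite{GanesanMatthiesTobiska}, where the argument is precisely your combination of the diagonal test pair (which yields $|\mathbf v_h|_1^2+S_h(q_h,q_h)$), a divergence-recovery field $\mathbf w$ with $\text{div}\,\mathbf w=q_h$ and $\|\mathbf w\|_1\le C\|q_h\|_0$, its orthogonal interpolant $\mathbf z_h=j_h\mathbf w$, and the splitting $\nabla q_h=\pi_h\nabla q_h+\kappa_h\nabla q_h$ in which the $\pi_h$ part is killed by \eqref{orthogonality_condition} and the fluctuation part is absorbed into the stabilization norm through $\alpha_K\sim h_K^2$, followed by the small-$\delta$ combination and the bound on the test norm. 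The only detail worth flagging is that you apply A1 and the orthogonality \eqref{orthogonality_condition} to $\mathbf w\in(H_0^1(\Omega))^2$, whereas the paper states them for $H^2(\Omega)$ functions; this is harmless because the interpolant constructed in the cited reference is defined for $H^1$ functions (the case $l=1$ in \eqref{assumption_a1}) and preserves homogeneous boundary values, but strictly speaking that is the version your argument needs.
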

Based on Lemma \ref{BB_Condition}, the discrete Stokes eigenvalue problem (\ref{dwstokesproblem})
is consistent with the continuous problem (\ref{wstokesproblem}) (\cite{GanesanMatthiesTobiska}).

\section{Convergence analysis}
In thois section, we give the convergence analysis for the eigenpair
approximation $(\mathbf u_h,p_h,\lambda_h)$ in
(\ref{dwstokesproblem}).

We know that the convergence rate of the eigenpair approximations by
the finite element methods depends on the regularities of the exact
eigenfunctions. The exact  eigenfunctions of the Stokes problem only
belong to the space $(H^1(\Omega))^2\times H^0(\Omega)$ on general
domains.
 But for the domains with smooth boundary, the exact
eigenfunctions have additional regularities. In this case we need to
use isoparametric mixed finite element methods to fit the domain
more exactly (\cite{BrennerScott} and \cite{Ciarlet}). The goals of
this paper are to use LPS method to solve the Stokes eigenvalue
problem,  and propose and analyze a postprocessing method which can
improve the convergence rate for both eigenvalue and  eigenfunction
approximations. The assumption that $\Omega$ is a convex polygonal
domain can make the expression of the main idea of this paper more
directly. But, we need to notice that this assumption limits the
regularity of the exact eigenfunctions and makes the analysis of the
convergence rates much more complicated. It is well known
(\cite{BacutaBramble, BacutaBramblePasciak, FabesKenigVerchota})
that for a given ${\bf f} \in (H^\gamma(\Omega))^2$ the solution
$({\mathbf u},p)$ of the corresponding Stokes  problem
\begin{equation}
\left\{ \begin{array}{rcl}a({\mathbf u}, {\mathbf v})-b({\bf
v},p)&=&r({\bf
f}, {\mathbf v})\ \ \ \forall {\mathbf v}\in{\bf V},\\
b({\mathbf u}, q)&=&0\ \ \quad \quad \forall q\in Q
\end{array} \right.
\end{equation}
has the following regularity (\cite{BacutaBramble, BacutaBramblePasciak, BlumRannacher, FabesKenigVerchota})
\begin{align}\label{regularity}
\|{\mathbf u}\|_{2+\gamma}+\|p\|_{1+\gamma}\leq C\|{\bf
f}\|_{\gamma}\ \ \ \forall {\bf f}\in (H^{\gamma}(\Omega))^2,
\end{align}
where $0<\gamma\leq 1$ is a parameter that depends on the largest
interior angle of $\partial\Omega$ (\cite{BacutaBramble}).

From (\ref{dwstokesproblem}), we can know the following Rayleigh
quotient for $\lambda_h$ holds
\begin{align}\label{drayleigh}
\lambda_h=\frac{a({\mathbf u}_h,{\mathbf u}_h)}{r({\mathbf
u}_h,{\mathbf u}_h)}
\end{align}
Holds.

It is also known from
\cite{BabuskaOsborn} the Stokes eigenvalue problem
(\ref{dwstokesproblem}) has eigenvalues
$$0<(\lambda_1)_h\leq(\lambda_2)_h\leq\cdots\leq(\lambda_k)_h\leq\cdots\leq(\lambda_N)_h,$$
and the corresponding eigenfunctions
$$(({\mathbf u}_1)_h, (p_1)_h), (({\mathbf u}_2)_h, (p_2)_h),\cdots,
 (({\mathbf u}_k)_h, (p_k)_h),\cdots,
(({\mathbf u}_N)_h, (p_N)_h),$$ where $r(({\mathbf u}_i)_h, ({\bf
u}_j)_h)=\delta_{ij}, 1\leq i,j\leq N$, $N$ denotes the dimension of the finite element space
 $\mathbf V_h\times Q_h$ .
 
 Let us define the compact operator $T: (L^2(\Omega))^2\rightarrow (H^1(\Omega))^2$ and the operator 
 $K: (L^2(\Omega))^2\rightarrow L^2_0(\Omega)$ by 
 \begin{eqnarray}
 A(( T\mathbf f,K\mathbf f),(\mathbf v,q))&=&r(\mathbf f,\mathbf v),\ \ \ \forall (\mathbf v,q)\in (H_0^1(\Omega))^2\times L_0^2(\Omega).
 \end{eqnarray}
Hence the eigenvalue problem (\ref{wstokesproblem}) can be written as 
\begin{eqnarray}
\lambda T\mathbf u&=&\mathbf u.
\end{eqnarray}
Let $M(\lambda_i)$ denote the eigenspace corresponding to the eigenvalue $\lambda_i$ which is defined by 
\begin{eqnarray*}
M(\lambda_i)&=&\Big\{(\mathbf w, \psi)\in (H_0^1(\Omega))^2\times L_0^2(\Omega):\ (\mathbf w, \psi)\ {\rm is\ an\ eigenfunction\ of}\ (\ref{wstokesproblem}) \nonumber\\
&&\ \ \ \ \  {\rm corresponding\ to\ } \lambda_i \ {\rm  and}\  r(\mathbf w,\mathbf w)=1\Big\}.
\end{eqnarray*}

Similarly, we also introduce  the discrete operator $T_h: (L^2(\Omega))^2\rightarrow \mathbf V_h$ and the operator
 $K_h: (L^2(\Omega))^2\rightarrow Q_h$ by
 \begin{eqnarray}
 A_h(( T_h\mathbf f,K_h\mathbf f),(\mathbf v,q))&=&r(\mathbf f,\mathbf v), 
 \ \ \ \forall (\mathbf v,\mathbf q)\in \mathbf V_h\times Q_h.
 \end{eqnarray}
Hence the operator form of the discrete eigenvalue problem (\ref{dwstokesproblem}) is 
\begin{eqnarray}
\lambda_h T_h\mathbf u_h&=&\mathbf u_h.
\end{eqnarray}

In \cite{GanesanMatthiesTobiska}, the convergence result of LPS method for Stokes problems has
been given. Combining abstract spectral approximation results from \cite{BabuskaOsborn}, we can give the
 convergence results for the Stokes eigenvalue problem by LPS method.
 The eigenvalue approximation
$\lambda_h$ and the corresponding eigenfunction approximation
$({\mathbf u}_h, p_h)$ have the following error bounds
(\cite{BabuskaOsborn1989, Osborn, FabesKenigVerchota,
GanesanMatthiesTobiska, MercierOsbornRappazRaviart,
GiraultRaviart}):
\begin{eqnarray}
\|\mathbf u-\mathbf u_h\|_1 &\leq&C\|(T-T_h)|_{M(\lambda)}\|_1.
\end{eqnarray}

%
%\begin{align}%\label{2.21}
%|\lambda-\lambda_h|&\leq C(\inf_{{\mathbf v}_h \in {\bf V}_h}\|{\bf
%u}-{\mathbf v}_h\|_1+\inf_{q_h \in Q_h}\|p-q_h\|_0)^2,\label{2.22}\\
%\|{\mathbf u}-{\mathbf u}_h\|_0+h\|{\mathbf u}-{\mathbf
%u}_h\|_1&\leq Ch(\inf_{{\mathbf v}_h \in {\bf V}_h}\|{\mathbf u}-{\bf
%v}_h\|_1+\inf_{q_h \in
%Q_h}\|p-q_h\|_0),\label{err2.22}\\
%\|p-p_h\|_0&\leq C(\inf_{{\mathbf v}_h \in {\bf V}_h}\|{\mathbf
%u}-{\mathbf v}_h\|_1+\inf_{q_h \in Q_h}\|p-q_h\|_0).\label{err2.222}
%\end{align}
%In particular, if $({\mathbf u}, p)\in (H^{k+1}(\Omega))^2\times
%H^{k+1}(\Omega)$,  one has the following error estimates:
%\begin{align}
%|\lambda-\lambda_h|&\leq Ch^{2k}(\|{\mathbf u}\|_{k+1}+\|p\|_k)^2,\label{2.23}\\
%\|{\mathbf u}-{\mathbf u}_h\|_0+h\|{\mathbf u}-{\bf
%u}_h\|_1+h\|p-p_h\|_0&\leq Ch^{k+1}(\|{\mathbf u}\|_{k+1}+\|p\|_k).\label{2.24}
%\end{align}
%If the domain is convex and polygonal, we have
%\begin{align}
%|\lambda-\lambda_h|&\leq Ch^{2s}(\|{\mathbf u}\|_{s+1}+\|p\|_s)^2,\label{2.20s}\\
%\|{\mathbf u}-{\mathbf u}_h\|_0+h^s\|{\mathbf u}-{\bf
%u}_h\|_1+h^s\|p-p_h\|_0&\leq  Ch^{2s}(\|{\mathbf u}\|_{s+1}+\|p\|_s)
%\ ,\label{2.21s}
%\end{align}
%where and hereafter in this paper $s:=\min\{1+\gamma, k\}$.

In order to do the analysis for the postprocessing in the following sections, we also need the
 convergence result for
the eigenfunction approximation $\mathbf u_h$ in $H^{-1}$-norm. For
this aim, based on the result in \cite{BabuskaOsborn1989},  we first
need to
 use the duality argument to get $H^{-1}$-norm
error estimate for the finite element projection and the process is
similar to the one in the paper \cite{GanesanMatthiesTobiska} for
the $L^2$-norm error estimate.

The finite element projection $(R_h\mathbf u, R_hp)$ denotes the finite element solution of the
 following Stokes problem:

Find $(R_h(\mathbf u, p), G_h(\mathbf u,p))\in \mathbf V_h\times Q_h$ such that
\begin{eqnarray}\label{fem_projection}
A_h((R_h(\mathbf u, p), G_h(\mathbf u,p));(\mathbf v,q))=A((\mathbf u,p);(\mathbf v,q))\ \ \ \forall (\mathbf v,q)\in \mathbf V_h\times Q_h.
\end{eqnarray}
%\begin{equation}\label{fem_projection}
%\left\{\begin{array}{rcl}
% a(R_h(\mathbf u, p), \mathbf v)-b(\mathbf v, G_h(\mathbf u,p))&=&a(\mathbf u,
%\mathbf v)-b(\mathbf v, p)\ \ \ \ \forall \mathbf v\in\mathbf V_h,\\
%b(R_h(\mathbf u, p), q)
%+S_h(G_h(\mathbf u,p),q)&=&0\quad\quad\quad\quad\quad\quad\quad \ \ \ \forall q\in
%Q_h.
%\end{array}
%\right.
%\end{equation}
\begin{eqnarray}
T_h = R_h(T, K), && K_h = G_h(T, K). 
\end{eqnarray}
\begin{eqnarray}
\|\mathbf u-\mathbf u_h\|_1 &\leq&C\|(T-R_h(T, K)|_{M(\lambda)}\|_1.
\end{eqnarray}

From the definition, we have the orthogonal relation
\begin{eqnarray}\label{orognal_relation_fem_projection}
A_h((\mathbf u-R_h(\mathbf u,p),p-G_h(\mathbf u,p));(\mathbf v,q))&=&S_h(p,q)\nonumber\\ 
&&\ \ \forall (\mathbf v,q)\in \mathbf V_h\times Q_h.
\end{eqnarray}
\begin{eqnarray}
&&|||(\mathbf v_h-R_h(\mathbf u,p),q_h-G_h(\mathbf u,p))|||_A\nonumber\\
&\leq& \frac{1}{\beta_A}\sup_{0\neq(\mathbf w_h,\psi_h)\in \mathbf V_h\times Q_h}\frac{A_h((\mathbf v_h- R_h(\mathbf u,p),q_h-G_h(\mathbf u,p));(\mathbf w_h,\psi_h))}{|||(\mathbf w_h,\psi_h)|||_A}\nonumber\\
&\leq&\frac{1}{\beta_A}\sup_{0\neq(\mathbf w_h,\psi_h)\in \mathbf V_h\times Q_h}\frac{A_h((\mathbf v_h-\mathbf u,q_h-p);(\mathbf w_h,\psi_h))}{|||(\mathbf w_h,\psi_h)|||_A}\nonumber\\
&&+ \frac{1}{\beta_A}\sup_{0\neq(\mathbf w_h,\psi_h)\in \mathbf V_h\times Q_h}\frac{A_h((\mathbf u- R_h(\mathbf u,p),p-G_h(\mathbf u,p));(\mathbf w_h,\psi_h))}{|||(\mathbf w_h,\psi_h)|||_A}\nonumber\\
&\leq&\frac{C}{\beta_A}|||(\mathbf u-\mathbf v_h,p-q_h)|||_A+ 
\frac{1}{\beta_A}\sup_{0\neq(\mathbf w_h,\psi_h)\in \mathbf V_h\times Q_h}\frac{S_h(p,\psi_h)}{|||(\mathbf w_h,\psi_h)|||_A}\nonumber\\
&\leq&\frac{C}{\beta_A}|||(\mathbf u-\mathbf v_h,p-q_h)|||_A+ \frac{1}{\beta_A}S_h^{1/2}(p,p)
\end{eqnarray}
\begin{eqnarray}
&&|||(\mathbf u-R_h(\mathbf u,p),p-G_h(\mathbf u,p))|||_A\nonumber\\
 &\leq& 
|||(\mathbf u-\mathbf v_h,p-q_h)|||_A +
|||(\mathbf v_h-R_h(\mathbf u,p),q_h-G_h(\mathbf u,p))|||_A\nonumber\\
&\leq& C|||(\mathbf u-\mathbf v_h,p-q_h)|||_A+ C S_h^{1/2}(p,p).
\end{eqnarray}
The arbitrariness of $(\mathbf v_h,q_h)$ leads to the following
\begin{eqnarray}
&&|||(\mathbf u-R_h(\mathbf u,p),p-G_h(\mathbf u,p))|||_A \nonumber\\
& \leq&
 C\big(\inf_{(\mathbf v_h,q_h)\in \mathbf V_h\times Q_h}|||(\mathbf u-\mathbf v_h,p-q_h)|||_A
  + S_h^{1/2}(p,p)\big).
\end{eqnarray} 

\begin{eqnarray}
\|\mathbf u-\mathbf u_h\|_1 &\leq& C\delta_h(\lambda),
\end{eqnarray}
where $\delta_h(\lambda)$ is defined by 
\begin{eqnarray}
\delta_h(\lambda) := \sup_{(\mathbf w,\psi)\in M(\lambda)}\big(\inf_{(\mathbf v_h,q_h)\in \mathbf V_h\times Q_h}|||(\mathbf w-\mathbf v_h,\psi - q_h)|||_A + S_h^{1/2}(\psi,\psi)\big).
\end{eqnarray}

%\begin{equation}\label{orognal_relation_fem_projection}
%\left\{\begin{array}{rcl}
%a(\mathbf u-R_h(\mathbf u,p), \mathbf
%v)-b(\mathbf v, p-G_h(\mathbf u,p))=0&&\ \ \forall \mathbf v\in\mathbf V_h,\\
%b(\mathbf u-R_h(\mathbf u, p), q) =S_h(G_h(\mathbf u, p),q)&&\ \ \forall q\in Q_h.
%\end{array}
%\right.
%\end{equation}
We choose $\mathbf g\in (H_0^1(\Omega))^2$ such that $\|\mathbf
g\|_1=1$ and
\begin{eqnarray*}
\|\mathbf u-R_h(\mathbf u,p)\|_{-1} &=& r(\mathbf u-R_h(\mathbf u,p), \mathbf
g).
\end{eqnarray*}
Then we define a duality problem corresponding to $\mathbf g$:

Find $(\mathbf u_{g}, p_g)\in \mathbf V\times Q$ such that
\begin{equation}\label{dual_equation}
\left\{\begin{array}{rcl}
a(\mathbf v, \mathbf u_g)-b(\mathbf
v,p_g)&=&r(\mathbf v, \mathbf
g)\ \ \ \ \forall \mathbf v\in\mathbf V,\\
b(\mathbf u_g, q)&=&0\quad\quad\quad\ \ \forall q\in Q.
\end{array}
\right.
\end{equation}
Combination of (\ref{orognal_relation_fem_projection}) and
(\ref{dual_equation}) derives the following estimate
\begin{eqnarray*}
r(\mathbf u-R_h(\mathbf u,p), \mathbf g) &=& a(\mathbf u-R_h(\mathbf u,p),
\mathbf
u_g)-b(\mathbf u-R_h(\mathbf u,p), p_g)\nonumber\\
&=&a(\mathbf u-R_h(\mathbf u, p),\mathbf
u_g-\mathbf v_h)-b(\mathbf u-R_h(\mathbf u,p), p_g-q_h)\nonumber\\
&&-b(\mathbf u_g-\mathbf v_h, p-G_h(\mathbf u,p))-S_h(G_h(\mathbf u,p),q_h)\nonumber\\
&=&a(\mathbf u-R_h(\mathbf u, p),\mathbf
u_g-\mathbf v_h)-b(\mathbf u-R_h(\mathbf u,p), p_g-q_h)\nonumber\\
&&-b(\mathbf u_g-\mathbf v_h, p-G_h(\mathbf u,p))-S_h(p-G_h(\mathbf u,p),p_g-q_h)\nonumber\\
&&+S_h(p-G_h(\mathbf u,p),p_g)+S_h(p,p_g-q_h)-S_h(p,p_g).
%&=&A_h((\mathbf u-R_h(\mathbf u,p),p-G_h(\mathbf u,p));(\mathbf u_g-\mathbf v,p_g-q))\nonumber\\
%&&-S_h()
\end{eqnarray*}
Choosing $(\mathbf v_h, q_h)\in \mathbf V_h\times Q_h$ as an
interpolant of $(\mathbf u_g,p_g)$, we obtain
\begin{eqnarray*}
&&|r(\mathbf u-R_h(\mathbf u,p), \mathbf g)|\nonumber\\
&\leq& C\big(\|\mathbf u-R_h(\mathbf
u,p)\|_1+\|p-G_h(\mathbf u, p)\|_0)(\|\mathbf u_g-\mathbf
v_h\|_1+\|p_g-q_h\|_0\big)\nonumber\\
&&+|S_h(G_h(\mathbf u,p),q_h)|\nonumber\\
&\leq& C \big(|||(\mathbf u-R_h(\mathbf u,p), p-G_h(\mathbf u,p))|||_A+S_h^{1/2}(p,p)\big)\nonumber\\
&&\ \ \ \ 
\big(|||(\mathbf u_g-\mathbf v_h, p_g-q_h)|||_A+S_h^{1/2}(p_g,p_g)\big).
\end{eqnarray*}
In particular, when $\Omega$ is smooth, we have the regularity
estimate
\begin{eqnarray*}
\|\mathbf u_g\|_3+\|p_g\|_2\leq C\|\mathbf g\|_1,
\end{eqnarray*}

\begin{eqnarray*}
\|\mathbf u-R_h\mathbf u\|_{-1}&\leq& C \eta_h\delta_h,
\end{eqnarray*}
where 
\begin{eqnarray}
%\delta_h &=&|||(\mathbf u-R_h(\mathbf u,p),p-G_h(\mathbf u,p))|||_A,\\
\eta_h=\sup_{\|\mathbf g\|_1=1}\inf_{(\mathbf v,q)\in\mathbf V_h\times Q_h}\big(
|||(T\mathbf g-\mathbf v,K\mathbf g-q)|||_A+S_h^{1/2}(K\mathbf g,K\mathbf g)\big).
\end{eqnarray}

%and if $({\mathbf u}, p)\in (H^{k+1}(\Omega))^2\times
%H^{k+1}(\Omega)$, one has the following error estimates:
%\begin{eqnarray*}
%\|\mathbf u-R_h\mathbf u\|_{-1}&\leq&Ch^2(\|\mathbf
%u\|_{2}+\|p\|_{1})\ \ \ \quad\quad\quad  {\rm for}\ k=1,\\
% \|\mathbf
%u-R_h\mathbf u\|_{-1}&\leq&Ch^{2+k}(\|\mathbf u\|_{k+1}+\|p\|_{k})\
%\ \ \ \ \   {\rm for}\ k\geq2.
%\end{eqnarray*}
%Based on this estimate, combining the results from \cite{BabuskaOsborn1989} we can obtain the $H^{-1}$-norm error
%estimate for eigenfunction approximation $\mathbf u_h$
%\begin{align}
%\|\mathbf u-\mathbf u_h\|_{-1}&\leq C\|\mathbf u-R_h\mathbf u\|_{-1}\leq Ch^2(\|\mathbf
%u\|_{2}+\|p\|_{1})\ \ \ \quad\quad\quad  {\rm for}\ k=1,\label{-1_norm_k=1}\\
%\|\mathbf u-\mathbf u_h\|_{-1}&\leq C\|\mathbf u-R_h\mathbf u\|_{-1}\leq Ch^{2+k}(\|\mathbf
%u\|_{k+1}+\|p\|_{k})\ \ \ \ \ \   {\rm for}\
%k\geq2.\label{-1_norm_k>=2}
%\end{align}

%\begin{eqnarray*}
%\|\mathbf u-R_h\mathbf u\|_{-1}&\leq& C|||(\mathbf u-R_h(\mathbf u,p),p-G_h(\mathbf u,p))|||_A
%\end{eqnarray*}
%If the domain is convex polygonal, we have the regularity estimate
%\begin{eqnarray*}
%\|\mathbf u_g\|_{2+\gamma}+\|p_g\|_{1+\gamma}\leq C\|\mathbf g\|_1,
%\end{eqnarray*}
%one has the following error estimates:
%\begin{eqnarray*}
%\|\mathbf u-R_h\mathbf u\|_{-1}&\leq& Ch^{2s}(\|{\mathbf
%u}\|_{s+1}+\|p\|_s).
%\end{eqnarray*}
%The same reason leads to
%\begin{align}\label{-1_norm_s}
%\|\mathbf u-\mathbf u_h\|_{-1}\leq C\|\mathbf u-R_h\mathbf u\|_{-1}\leq Ch^{2s}(\|{\mathbf
%u}\|_{s+1}+\|p\|_s).
%\end{align}

\section{One correction step}
In this section, we present a type of correction step to improve the
accuracy of the current eigenvalue and eigenfunction approximations.
This correction method contains solving an auxiliary source problem
in the finer finite element space and an eigenvalue problem on the
coarsest finite element space. For simplicity of notation, we set
$(\lambda,u)=(\lambda_i,u_i)\ (i=1,2,\cdots,k,\cdots)$  and
$(\lambda_h, u_h)=(\lambda_{i,h},u_{i,h})\ (i=1,2,\cdots,N_h)$ to
denote an eigenpair of problem (\ref{weak_problem}) and
(\ref{weak_problem_Discrete}), respectively.

To derive our method, we need first to introduce  the error
expansions of the eigenvalues by the Rayleigh quotient formula. It
is well known that there have been the Rayleigh quotient error
expansions for the eigenvalues of the second  order elliptic
problems (\cite{LinYan}).

\begin{theorem}\label{TS.7}
Assume $({\mathbf u}, p, \lambda)$ is the true solution of the
Stokes eigenvalue problem (\ref{wstokesproblem}),  $0\neq {\mathbf
w}\in (H_0^1(\Omega))^2$ and $\psi\in L_0^2(\Omega)$ satisfy
\begin{align}\label{orth}
b({\mathbf w}, \psi)+S_h(\psi,\psi)=0.
\end{align}
Let us define
\begin{align}\label{rayleighw}
\hat{\lambda}=\frac{a({\mathbf w},{\mathbf w})}{r({\mathbf w},{\bf
w})}.
\end{align}
Then, we have
\begin{align}\label{rayexpan}
\hat{\lambda}-\lambda&=\frac{a({\mathbf w}-{\mathbf u},{\mathbf
w}-{\bf u})+2b({\mathbf w}-{\mathbf u},p-\psi)-\lambda r({\bf
w}-{\mathbf u},{\mathbf w}-{\mathbf u})}{r({\mathbf w},{\mathbf
w})}\nonumber\\
&\quad\quad\quad
 -\frac{2S_h(\psi,\psi)}{r({\mathbf w},{\mathbf w})}.
\end{align}
If the condition (\ref{orth}) is changed to be
\begin{align}\label{orth2}
 b(\mathbf w,\varphi)&=0,
\end{align}
the expansion for $\hat{\lambda}-\lambda$ should be
\begin{align}\label{rayexpan2}
\hat{\lambda}-\lambda&=\frac{a({\mathbf w}-{\mathbf u},{\mathbf
w}-{\bf u})+2b({\mathbf w}-{\mathbf u},p-\psi)-\lambda r({\bf
w}-{\mathbf u},{\mathbf w}-{\mathbf u})}{r({\mathbf w},{\mathbf
w})}.
\end{align}
\end{theorem}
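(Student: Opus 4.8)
The plan is to start from the definition (\ref{rayleighw}) and reduce the whole statement to an algebraic identity for the numerator of the Rayleigh quotient. Writing
\[
\hat{\lambda}-\lambda=\frac{a(\mathbf w,\mathbf w)-\lambda r(\mathbf w,\mathbf w)}{r(\mathbf w,\mathbf w)},
\]
it suffices to show that $a(\mathbf w,\mathbf w)-\lambda r(\mathbf w,\mathbf w)$ equals $a(\mathbf w-\mathbf u,\mathbf w-\mathbf u)+2b(\mathbf w-\mathbf u,p-\psi)-\lambda r(\mathbf w-\mathbf u,\mathbf w-\mathbf u)-2S_h(\psi,\psi)$, since both expansions (\ref{rayexpan}) and (\ref{rayexpan2}) share the denominator $r(\mathbf w,\mathbf w)$.

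First I would substitute $\mathbf w=\mathbf u+(\mathbf w-\mathbf u)$ and expand using the bilinearity and symmetry of $a(\cdot,\cdot)$ and $r(\cdot,\cdot)$, obtaining
\[
a(\mathbf w,\mathbf w)-\lambda r(\mathbf w,\mathbf w)=\big[a(\mathbf u,\mathbf u)-\lambda r(\mathbf u,\mathbf u)\big]+2\big[a(\mathbf u,\mathbf w-\mathbf u)-\lambda r(\mathbf u,\mathbf w-\mathbf u)\big]+\big[a(\mathbf w-\mathbf u,\mathbf w-\mathbf u)-\lambda r(\mathbf w-\mathbf u,\mathbf w-\mathbf u)\big].
\]
The last bracket already matches two of the desired terms, so the work concentrates on the first two. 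Testing the first equation of (\ref{wstokesproblem}) with $\mathbf v=\mathbf u$ gives $a(\mathbf u,\mathbf u)-b(\mathbf u,p)=\lambda r(\mathbf u,\mathbf u)$; because the second equation of (\ref{wstokesproblem}) forces $\nabla\cdot\mathbf u=0$, we have $b(\mathbf u,p)=0$ and the diagonal bracket vanishes. Testing instead with $\mathbf v=\mathbf w-\mathbf u$ gives $a(\mathbf u,\mathbf w-\mathbf u)-\lambda r(\mathbf u,\mathbf w-\mathbf u)=b(\mathbf w-\mathbf u,p)$, so the cross bracket contributes $2b(\mathbf w-\mathbf u,p)$.

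It then remains only to rewrite $2b(\mathbf w-\mathbf u,p)$. I would split $b(\mathbf w-\mathbf u,p)=b(\mathbf w-\mathbf u,p-\psi)+b(\mathbf w-\mathbf u,\psi)$; using $\nabla\cdot\mathbf u=0$ once more gives $b(\mathbf u,\psi)=0$, hence $b(\mathbf w-\mathbf u,\psi)=b(\mathbf w,\psi)$. Under hypothesis (\ref{orth}) the orthogonality $b(\mathbf w,\psi)+S_h(\psi,\psi)=0$ then yields $b(\mathbf w,\psi)=-S_h(\psi,\psi)$, producing the extra $-2S_h(\psi,\psi)$ term and hence the full expansion (\ref{rayexpan}). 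Under the alternative hypothesis (\ref{orth2}) one has instead $b(\mathbf w,\psi)=0$, so no stabilization term survives and (\ref{rayexpan2}) follows from exactly the same computation.

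There is no genuine analytic difficulty here, the identity being purely algebraic; the main points requiring care are invoking the incompressibility $\nabla\cdot\mathbf u=0$ at the two places where a $b(\mathbf u,\cdot)$ term must be killed, and using the symmetry of $a$ and $r$ so that the cross terms combine with the correct factor of $2$. I expect the only real risk of error to lie in the sign bookkeeping of the $b$-terms, in particular in keeping the orientation $b(\mathbf w-\mathbf u,\cdot)$ consistent with the convention $a(\mathbf u,\mathbf v)-b(\mathbf v,p)=\lambda r(\mathbf u,\mathbf v)$ used in (\ref{wstokesproblem}).
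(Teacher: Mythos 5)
Your proposal is correct and follows essentially the same route as the paper's proof: both expand the Rayleigh quotient numerator around $\mathbf u$, kill the pure-$\mathbf u$ and cross terms using the weak form (\ref{wstokesproblem}) together with the incompressibility $b(\mathbf u,q)=0$, and then convert $b(\mathbf w,\psi)$ into $-S_h(\psi,\psi)$ (or $0$) via the orthogonality hypothesis. The only difference is cosmetic bookkeeping — you expand $a-\lambda r$ in one stroke and obtain $b(\mathbf w-\mathbf u,p)$ directly, while the paper first produces $b(\mathbf w,p)$ and strips off $b(\mathbf u,\cdot)$ at the end — so the two arguments are mathematically identical.
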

\begin{proof}
From (\ref{wstokesproblem}), (\ref{dwstokesproblem}),
(\ref{drayleigh}), (\ref{orth}), (\ref{rayleighw}) and direct
computation, we have
\begin{eqnarray*}
\hat{\lambda}-\lambda&=&\frac{a({\mathbf w},{\mathbf w})-\lambda
r({\bf
w},{\mathbf w})}{r({\mathbf w},{\mathbf w})}\\
&=&\frac{a({\mathbf w}-{\mathbf u},{\mathbf w}-{\mathbf u})+2a({\bf
w},{\bf u})-a({\mathbf u},{\mathbf u})-\lambda r({\mathbf w},{\bf
w})}{r({\mathbf w},{\bf
w})} \\
&=&\frac{a({\mathbf w}-{\mathbf u},{\mathbf w}-{\mathbf u})+2\lambda
r({\mathbf w},{\mathbf u})+2b({\mathbf w}, p)-\lambda r({\mathbf
u},{\bf u})-\lambda r({\mathbf w},{\bf
w})}{r({\mathbf w},{\mathbf w})} \\
&=&\frac{a({\mathbf w}-{\mathbf u},{\mathbf w}-{\mathbf u})-\lambda
r({\mathbf w}-{\bf
u},{\mathbf w}-{\mathbf u})+2b({\mathbf w}, p)}{r({\mathbf w},{\mathbf w})}\\
&=&\frac{a({\mathbf w}-{\mathbf u},{\mathbf w}-{\mathbf u})-\lambda
r({\mathbf w}-{\bf
u},{\mathbf w}-{\mathbf u})+2b({\mathbf w}, p-\psi)-2S_h(\psi,\psi)}{r({\mathbf w},{\mathbf w})}\\
&=&\frac{a({\mathbf w}-{\mathbf u},{\mathbf w}-{\mathbf u})-\lambda
r({\mathbf w}-{\mathbf u},{\mathbf w}-{\mathbf u})+2b({\bf
w}-{\mathbf u}, p-\psi)-2S_h(\psi,\psi)}{r({\mathbf w},{\mathbf
w})}.
\end{eqnarray*}
This is the desired result (\ref{rayexpan}) and the expansion of (\ref{rayexpan2}) can be prooved
similarly.
\end{proof}

Assume we have obtained an eigenpair approximation
$(\lambda_{h_1},\mathbf u_{h_1}, p_{h_1})\in\mathcal{R}\times \mathbf V_{h_1}\times Q_h$. Now we
introduce a type of correction step to improve the accuracy of the
current eigenpair approximation $(\lambda_{h_1},\mathbf u_{h_1}, p_{h_1})$. Let
$\mathbf V_{h_2}\times Q_{h_2}\subset (H_0^1(\Omega))^2\times L_0^2(\Omega)$ be a finer finite element space such that
$\mathbf V_{h_1}\times Q_{h_1}\subset \mathbf V_{h_2}\times Q_{h_2}$. Based on this finer finite element space,
we define the following correction step.

\begin{algorithm}\label{Correction_Step}
One Correction Step

\begin{enumerate}
\item Define the following auxiliary source problem:

Find $\tilde{u}_{h_2}\in V_{h_2}$ such that
\begin{eqnarray}\label{aux_problem}
a(\widetilde{u}_{h_2},v_{h_2})&=&\lambda_{h_1}b(u_{h_1},v_{h_2}),\ \
\ \forall v_{h_2}\in V_{h_2}.
\end{eqnarray}
Solve this equation to obtain a new eigenfunction approximation
$\tilde{u}_{h_2}\in V_{h_2}$.
\item  Define a new finite element
space $V_{H,h_2}=V_H+{\rm span}\{\widetilde{u}_{h_2}\}$ and solve
the following eigenvalue problem:

Find $(\lambda_{h_2},u_{h_2})\in\mathcal{R}\times V_{H,h_2}$ such
that $b(u_{h_2},u_{h_2})=1$ and
\begin{eqnarray}\label{Eigen_Augment_Problem}
a(u_{h_2},v_{H,h_2})&=&\lambda_{h_2} b(u_{h_2},v_{H,h_2}),\ \ \
\forall v_{H,h_2}\in V_{H,h_2}.
\end{eqnarray}
\end{enumerate}
Define $(\lambda_{h_2},u_{h_2})={\it
Correction}(V_H,\lambda_{h_1},u_{h_1},V_{h_2})$.
\end{algorithm}
\begin{theorem}\label{Error_Estimate_One_Correction_Theorem}
Assume the current eigenpair approximation
$(\lambda_{h_1},u_{h_1})\in\mathcal{R}\times V_{h_1}$ has the
following error estimates
\begin{eqnarray}
\|u-u_{h_1}\|_a &\lesssim &\varepsilon_{h_1}(\lambda),\label{Error_u_h_1}\\
\|u-u_{h_1}\|_{-a}&\lesssim&\eta_a(H)\|u-u_{h_1}\|_a,\label{Error_u_h_1_nagative}\\
|\lambda-\lambda_{h_1}|&\lesssim&\varepsilon_{h_1}^2(\lambda).\label{Error_Eigenvalue_h_1}
\end{eqnarray}
Then after one correction step, the resultant approximation
$(\lambda_{h_2},u_{h_2})\in\mathcal{R}\times V_{h_2}$ has the
following error estimates
\begin{eqnarray}
\|u-u_{h_2}\|_a &\lesssim &\varepsilon_{h_2}(\lambda),\label{Estimate_u_u_h_2}\\
\|u-u_{h_2}\|_{-a}&\lesssim&\eta_a(H)\|u-u_{h_2}\|_a,\label{Estimate_u_h_2_Nagative}\\
|\lambda-\lambda_{h_2}|&\lesssim&\varepsilon_{h_2}^2(\lambda),\label{Estimate_lambda_lambda_h_2}
\end{eqnarray}
where
$\varepsilon_{h_2}(\lambda):=\eta_a(H)\varepsilon_{h_1}(\lambda)+\varepsilon_{h_1}^2(\lambda)+\delta_{h_2}(\lambda)$.
\end{theorem}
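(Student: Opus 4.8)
The plan is to analyze the two stages of Algorithm \ref{Correction_Step} separately: first the auxiliary source solve \eqref{aux_problem} that produces $\widetilde{u}_{h_2}$, and then the small eigenvalue solve \eqref{Eigen_Augment_Problem} on the augmented space $V_{H,h_2}$. Throughout I write $\|\cdot\|_a=a(\cdot,\cdot)^{1/2}$ for the energy norm and $\|\cdot\|_{-a}$ for its dual norm, and I treat $b(\cdot,\cdot)$ as the mass form appearing in the normalization $b(u_{h_2},u_{h_2})=1$. The crucial auxiliary object is the Ritz projection $\widehat{u}_{h_2}\in V_{h_2}$ defined by $a(\widehat{u}_{h_2},v_{h_2})=\lambda\, b(u,v_{h_2})$ for all $v_{h_2}\in V_{h_2}$; since $a(u,v)=\lambda b(u,v)$ holds for the exact eigenfunction, $\widehat{u}_{h_2}$ is precisely the finite element approximation of $u$ in $V_{h_2}$, so the approximation theory underlying $\delta_{h_2}$ gives $\|u-\widehat{u}_{h_2}\|_a\lesssim\delta_{h_2}(\lambda)$.

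First I would estimate the source-problem error $\|u-\widetilde{u}_{h_2}\|_a$. Subtracting \eqref{aux_problem} from the defining relation of $\widehat{u}_{h_2}$ yields
\begin{eqnarray*}
a(\widehat{u}_{h_2}-\widetilde{u}_{h_2},v_{h_2})&=&b(\lambda u-\lambda_{h_1}u_{h_1},v_{h_2}),\qquad\forall v_{h_2}\in V_{h_2}.
\end{eqnarray*}
Testing with $v_{h_2}=\widehat{u}_{h_2}-\widetilde{u}_{h_2}$ and using the definition of the dual norm gives $\|\widehat{u}_{h_2}-\widetilde{u}_{h_2}\|_a\le\|\lambda u-\lambda_{h_1}u_{h_1}\|_{-a}$. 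Splitting $\lambda u-\lambda_{h_1}u_{h_1}=\lambda(u-u_{h_1})+(\lambda-\lambda_{h_1})u_{h_1}$ and invoking the hypotheses \eqref{Error_u_h_1_nagative}, \eqref{Error_u_h_1} and \eqref{Error_Eigenvalue_h_1} bounds the right-hand side by $\eta_a(H)\varepsilon_{h_1}(\lambda)+\varepsilon_{h_1}^2(\lambda)$, where boundedness of $\|u_{h_1}\|_{-a}$ follows from the normalization. A triangle inequality with $\|u-\widehat{u}_{h_2}\|_a\lesssim\delta_{h_2}(\lambda)$ then produces
\begin{eqnarray*}
\|u-\widetilde{u}_{h_2}\|_a&\lesssim&\eta_a(H)\varepsilon_{h_1}(\lambda)+\varepsilon_{h_1}^2(\lambda)+\delta_{h_2}(\lambda)=\varepsilon_{h_2}(\lambda).
\end{eqnarray*}

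Next I would pass to the eigenvalue solve \eqref{Eigen_Augment_Problem} on $V_{H,h_2}=V_H+{\rm span}\{\widetilde{u}_{h_2}\}$. Since $\widetilde{u}_{h_2}\in V_{H,h_2}$, the best approximation of $u$ in the augmented space obeys $\inf_{v\in V_{H,h_2}}\|u-v\|_a\le\|u-\widetilde{u}_{h_2}\|_a\lesssim\varepsilon_{h_2}(\lambda)$. Feeding this into the Babu\v{s}ka--Osborn spectral approximation theory for the Galerkin eigenvalue problem posed on the subspace $V_{H,h_2}$ yields $\|u-u_{h_2}\|_a\lesssim\inf_{v\in V_{H,h_2}}\|u-v\|_a\lesssim\varepsilon_{h_2}(\lambda)$, which is \eqref{Estimate_u_u_h_2}. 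For \eqref{Estimate_u_h_2_Nagative} I would rerun the Aubin--Nitsche duality argument that produced \eqref{Error_u_h_1_nagative}; because $V_H\subset V_{H,h_2}$, the dual-approximation factor for the augmented space is no larger than that for $V_H$, namely $\eta_a(H)$, so $\|u-u_{h_2}\|_{-a}\lesssim\eta_a(H)\|u-u_{h_2}\|_a$. Finally, since $u_{h_2}$ satisfies the orthogonality hypothesis of Theorem \ref{TS.7}, the Rayleigh quotient expansion \eqref{rayexpan} with $w=u_{h_2}$ gives $|\lambda-\lambda_{h_2}|\lesssim\|u-u_{h_2}\|_a^2+\|u-u_{h_2}\|_0^2\lesssim\varepsilon_{h_2}^2(\lambda)$, establishing \eqref{Estimate_lambda_lambda_h_2}.

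The main obstacle is the spectral-approximation step on $V_{H,h_2}$: unlike a pure best-approximation estimate, the bound $\|u-u_{h_2}\|_a\lesssim\inf_{v\in V_{H,h_2}}\|u-v\|_a$ requires that the augmented space already resolve the target eigenvalue with the correct index and with a spectral gap bounded away from zero. This is exactly the role of including $V_H$: it guarantees eigenvalue separation and selects the correct branch, so that adjoining the single enrichment $\widetilde{u}_{h_2}$ only sharpens an already-convergent approximation. Keeping the constants in this step independent of $h_1$ and $h_2$, and verifying that the pressure and stabilization contributions in the genuine Stokes expansion \eqref{rayexpan} are indeed of order $\varepsilon_{h_2}^2(\lambda)$ rather than lower, is where the careful bookkeeping lies.
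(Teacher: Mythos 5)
Your proposal is correct and follows essentially the same route as the paper's own proof: you introduce the Ritz projection of $u$ into $V_{h_2}$ (the paper's $P_{h_2}u$), bound $\|\widetilde{u}_{h_2}-P_{h_2}u\|_a$ by the dual norm $\|\lambda u-\lambda_{h_1}u_{h_1}\|_{-a}$ via the same splitting, conclude $\|u-\widetilde{u}_{h_2}\|_a\lesssim\varepsilon_{h_2}(\lambda)$ by the triangle inequality, and then invoke Babu\v{s}ka--Osborn theory on $V_{H,h_2}$ together with the observation $\widetilde{\eta}_a(H)\le\eta_a(H)$ (since $V_H\subset V_{H,h_2}$) and the Rayleigh-quotient expansion, exactly as the paper does.
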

\begin{proof}
From problems (\ref{Projection_Problem}), (\ref{weak_problem}) and
(\ref{aux_problem}), and (\ref{Error_u_h_1}),
(\ref{Error_u_h_1_nagative}) and (\ref{Error_Eigenvalue_h_1}), the
following estimate holds
\begin{eqnarray*}
\|\widetilde{u}_{h_2}-P_{h_2}u\|_a^2&\lesssim
&a(\widetilde{u}_{h_2}-P_{h_2}u,\widetilde{u}_{h_2}-P_{h_2}u)=b(\lambda_{h_1}u_{h_1}-\lambda
u,\widetilde{u}_{h_2}-P_{h_2}u)\nonumber\\
& \lesssim &\|\lambda_{h_1}u_{h_1}-\lambda
u\|_{-a}\|\widetilde{u}_{h_2}-P_{h_2}u\|_a\nonumber\\
&\lesssim & (|\lambda_{h_1}-\lambda|\|u_{h_1}\|_{-a}+\lambda
\|u_{h_1}-u\|_{-a})\|\widetilde{u}_{h_2}-P_{h_2}u\|_a\nonumber\\
&\lesssim &\big
(\varepsilon_{h_1}^2(\lambda)+\eta_a(H)\varepsilon_{h_1}(\lambda)\big)\|\widetilde{u}_{h_2}-P_{h_2}u\|_a.
\end{eqnarray*}
Then we have
\begin{eqnarray}\label{Estimate_u_tilde_u_h_2}
\|\widetilde{u}_{h_2}-P_{h_2}u\|_a &\lesssim
&\varepsilon_{h_1}^2(\lambda)+\eta_a(H)\varepsilon_{h_1}(\lambda).
\end{eqnarray}
Combining (\ref{Estimate_u_tilde_u_h_2}) and
 the error estimate of finite element projection
\begin{eqnarray*}
\|u-P_{h_2}u\|_a &\lesssim&\delta_{h_2}(\lambda),
\end{eqnarray*}
we have
\begin{eqnarray}\label{Error_tilde_u_h_2_u}
\|\widetilde{u}_{h_2}-u\|_{a}&\lesssim&\varepsilon_{h_1}^2(\lambda)+\eta_a(H)\varepsilon_{h_1}(\lambda)
+\delta_{h_2}(\lambda).
\end{eqnarray}
Now we come to estimate the eigenpair solution
$(\lambda_{h_2},u_{h_2})$ of problem (\ref{Eigen_Augment_Problem}).
Based on the error estimate theory of eigenvalue problem by finite
element method (\cite{Babuska2,BabuskaOsborn}), the following
estimates hold
\begin{eqnarray}\label{Error_u_u_h_2}
\|u-u_{h_2}\|_a&\lesssim& \sup_{w\in M(\lambda)}\inf_{v\in
V_{H,h_2}}\|w-v\|_a\lesssim \|u-\widetilde{u}_{h_2}\|_a,
\end{eqnarray}
and
\begin{eqnarray}\label{Error_u_u_h_2_Negative}
\|u-u_{h_2}\|_{-a}&\lesssim&\widetilde{\eta}_a(H)\|u-u_{h_2}\|_a,
\end{eqnarray}
where
\begin{eqnarray}\label{Eta_a_h_2}
\widetilde{\eta}_a(H)&=&\sup_{f\in V,\|f\|_a=1}\inf_{v\in
V_{H,h_2}}\|Tf-v\|_a \leq \eta_a(H).
\end{eqnarray}
From (\ref{Error_tilde_u_h_2_u}), (\ref{Error_u_u_h_2}),
(\ref{Error_u_u_h_2_Negative}) and (\ref{Eta_a_h_2}), we can obtain
(\ref{Estimate_u_u_h_2}) and (\ref{Estimate_u_h_2_Nagative}). The
estimate (\ref{Estimate_lambda_lambda_h_2}) can be derived by
Theorem \ref{Rayleigh_Quotient_error_theorem} and
(\ref{Estimate_u_u_h_2}).
\end{proof}

If the eigenpair approximation $({\mathbf u}_h, p_h, \lambda_h)$ of
the Stokes eigenvalue problem (\ref{wstokesproblem}) has been
obtained, we define the following Stokes source problem:

 Find $(\tilde{\bf
u}, \tilde{p})\in {\bf V}\times Q$ such that
\begin{equation}\label{auxstoke}
\left\{\begin{array}{rcl} a(\tilde{\mathbf u},{\mathbf v})-b({\bf
v}, \tilde{p})&=&\lambda_hr({\mathbf u}_h, {\mathbf v})\ \ \ \
\forall {\bf
v}\in {\bf V},\\
b(\tilde{\mathbf u}, q)&=&0\ \quad \quad \quad \quad \quad \forall
q\in Q.
\end{array}
\right.
\end{equation}
We also define the following Rayleigh  quotient formula for the solution $(\tilde{\mathbf u}, \tilde{p})$
\begin{align}\label{defofll}
\tilde{\lambda}=\frac{a(\tilde{\mathbf u},\tilde{\bf
u})}{r(\tilde{\mathbf u},\tilde{\mathbf u})}.
\end{align}

For the eigenpair $(\tilde{\mathbf u}, \tilde{p}, \tilde{\lambda})$, we can give the
following error estimate.

\begin{theorem}\label{Ts.8}
Assume  $( {\mathbf u}, p, \lambda)$
 is the true solution of the Stokes eigenvalue problem (\ref{wstokesproblem}), $({\mathbf u}_h,p_h, \lambda_h)$
is the corresponding finite element solution of the discrete Stokes
eigenvalue problem (\ref{dwstokesproblem}),  $(\tilde{\bf
u},\tilde{p})$ is the true solution of problem (\ref{auxstoke}) and
$\tilde{\lambda}$ is defined by (\ref{defofll}). Then we have the
following estimates
\begin{align}
\|{\mathbf u}-\tilde{\mathbf u}\|_1+\|p-\tilde{p}\|_0&\leq C(\|{\bf
u}-{\bf
u}_h\|_{-1}+|\lambda-\lambda_h|),\label{errorofpostsolution}\\
 |\tilde{\lambda}-\lambda|&\leq C(\|{\mathbf u}-{\bf
u}_h\|_{-1}^2+|\lambda-\lambda_h|^2).\label{errorofposteigenvalue}
\end{align}
\end{theorem}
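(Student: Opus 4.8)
The plan is to establish the error estimates for the postprocessed solution $(\tilde{\mathbf u},\tilde p)$ in two stages: first bounding the source-problem error \eqref{errorofpostsolution} by comparing the postprocessing problem \eqref{auxstoke} against the original eigenvalue problem \eqref{wstokesproblem}, and then feeding this bound into the Rayleigh quotient expansion of Theorem \ref{TS.7} to obtain the squared eigenvalue estimate \eqref{errorofposteigenvalue}. The key observation is that $(\tilde{\mathbf u},\tilde p)$ solves a Stokes source problem with right-hand side $\lambda_h r({\mathbf u}_h,\cdot)$, whereas the exact eigenpair $({\mathbf u},p)$ solves the same type of problem with right-hand side $\lambda r({\mathbf u},\cdot)$; since both are \emph{continuous} Stokes problems (no stabilization), their difference is controlled by the difference of the data.

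\medskip
\noindent\textbf{Step 1 (bounding the solution error).} Subtracting the weak eigenvalue system \eqref{wstokesproblem} from the postprocessing system \eqref{auxstoke}, the pair $({\mathbf u}-\tilde{\mathbf u},p-\tilde p)\in{\mathbf V}\times Q$ satisfies
\begin{eqnarray*}
a({\mathbf u}-\tilde{\mathbf u},{\mathbf v})-b({\mathbf v},p-\tilde p)&=&\lambda r({\mathbf u},{\mathbf v})-\lambda_h r({\mathbf u}_h,{\mathbf v}),\\
b({\mathbf u}-\tilde{\mathbf u},q)&=&0,
\end{eqnarray*}
for all $({\mathbf v},q)\in{\mathbf V}\times Q$. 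Writing the right-hand side as $r(\lambda{\mathbf u}-\lambda_h{\mathbf u}_h,{\mathbf v})$ and splitting $\lambda{\mathbf u}-\lambda_h{\mathbf u}_h=\lambda({\mathbf u}-{\mathbf u}_h)+(\lambda-\lambda_h){\mathbf u}_h$, I would apply the inf-sup stability \eqref{2.5} of the bilinear form $A$ to the difference, which gives
\begin{eqnarray*}
\|{\mathbf u}-\tilde{\mathbf u}\|_1+\|p-\tilde p\|_0&\leq& C\sup_{0\neq({\mathbf v},q)}\frac{r(\lambda{\mathbf u}-\lambda_h{\mathbf u}_h,{\mathbf v})}{\|{\mathbf v}\|_1+\|q\|_0}.
\end{eqnarray*}
Since the test functions ${\mathbf v}$ appearing in the numerator lie in ${\mathbf V}=(H_0^1)^2$, I can estimate $r(\cdot,{\mathbf v})$ by duality, using the $H^{-1}$-norm: $r(\lambda{\mathbf u}-\lambda_h{\mathbf u}_h,{\mathbf v})\leq\|\lambda{\mathbf u}-\lambda_h{\mathbf u}_h\|_{-1}\|{\mathbf v}\|_1$. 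The split then yields $\|\lambda{\mathbf u}-\lambda_h{\mathbf u}_h\|_{-1}\leq\lambda\|{\mathbf u}-{\mathbf u}_h\|_{-1}+|\lambda-\lambda_h|\|{\mathbf u}_h\|_{-1}$, and because $\|{\mathbf u}_h\|_{-1}$ is uniformly bounded (the eigenfunctions are $r$-normalized), this produces exactly the right-hand side of \eqref{errorofpostsolution}.

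\medskip
\noindent\textbf{Step 2 (the eigenvalue estimate).} For \eqref{errorofposteigenvalue} I would invoke Theorem \ref{TS.7} with the choice ${\mathbf w}=\tilde{\mathbf u}$ and $\psi=\tilde p$. Since $(\tilde{\mathbf u},\tilde p)$ solves the continuous source problem \eqref{auxstoke}, it satisfies $b(\tilde{\mathbf u},q)=0$ for all $q\in Q$, which is precisely the second hypothesis \eqref{orth2} of that theorem (the unstabilized divergence-free condition, with $S_h$ absent). The error expansion \eqref{rayexpan2} then reads
\begin{eqnarray*}
\tilde\lambda-\lambda&=&\frac{a(\tilde{\mathbf u}-{\mathbf u},\tilde{\mathbf u}-{\mathbf u})+2b(\tilde{\mathbf u}-{\mathbf u},p-\tilde p)-\lambda r(\tilde{\mathbf u}-{\mathbf u},\tilde{\mathbf u}-{\mathbf u})}{r(\tilde{\mathbf u},\tilde{\mathbf u})}.
\end{eqnarray*}
Every term in the numerator is quadratic in the solution error $\|{\mathbf u}-\tilde{\mathbf u}\|_1+\|p-\tilde p\|_0$: the first by boundedness of $a$, the second by boundedness of $b$, and the third by the $L^2$-continuity of $r$ together with $\|\cdot\|_0\leq\|\cdot\|_1$. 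Since the denominator $r(\tilde{\mathbf u},\tilde{\mathbf u})$ is bounded below away from zero (it converges to $r({\mathbf u},{\mathbf u})=1$), I obtain $|\tilde\lambda-\lambda|\leq C(\|{\mathbf u}-\tilde{\mathbf u}\|_1+\|p-\tilde p\|_0)^2$. Substituting the already-proved bound \eqref{errorofpostsolution} and using $(a+b)^2\leq 2(a^2+b^2)$ gives the desired squared estimate \eqref{errorofposteigenvalue}.

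\medskip
\noindent\textbf{The main obstacle} is Step 1, and specifically the justification that $r(\lambda{\mathbf u}-\lambda_h{\mathbf u}_h,{\mathbf v})$ can legitimately be measured in the $H^{-1}$-norm so that the superconvergent factor $\|{\mathbf u}-{\mathbf u}_h\|_{-1}$ enters rather than the weaker $\|{\mathbf u}-{\mathbf u}_h\|_0$ or $\|{\mathbf u}-{\mathbf u}_h\|_1$. This is the crux of why the postprocessing improves the convergence rate: the gain comes entirely from pairing the velocity error against an $H^1$ test function and exploiting the negative-norm estimate $\|{\mathbf u}-R_h{\mathbf u}\|_{-1}\leq C\eta_h\delta_h$ derived via the duality argument earlier in the excerpt. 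Care is needed to confirm that the eigenfunction error $\|{\mathbf u}-{\mathbf u}_h\|_{-1}$ (as opposed to the projection error $\|{\mathbf u}-R_h{\mathbf u}\|_{-1}$) inherits this sharper bound, which requires controlling the additional eigenfunction-versus-projection discrepancy; this is standard spectral-approximation bookkeeping but is the one place where the stabilization term $S_h$ could in principle contaminate the estimate, so I would track its contribution explicitly through the $\delta_h(\lambda)$ and $\eta_h$ quantities.
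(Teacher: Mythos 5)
Your proposal is correct and follows essentially the same route as the paper's own proof: subtract the eigenvalue system \eqref{wstokesproblem} from the source problem \eqref{auxstoke}, bound the residual $r(\lambda\mathbf u-\lambda_h\mathbf u_h,\mathbf v)$ via $H^{-1}$--$H^1_0$ duality and the inf-sup stability \eqref{2.5} to get \eqref{errorofpostsolution}, then apply the Rayleigh quotient expansion \eqref{rayexpan2} with $\mathbf w=\tilde{\mathbf u}$, $\psi=\tilde p$ (noting $b(\tilde{\mathbf u},q)=0$) and square the first bound. The only cosmetic difference is your splitting $\lambda(\mathbf u-\mathbf u_h)+(\lambda-\lambda_h)\mathbf u_h$ versus the paper's $\lambda_h(\mathbf u_h-\mathbf u)+(\lambda_h-\lambda)\mathbf u$, which is immaterial.
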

\begin{proof}
First from Stokes eigenvalue problem (\ref{wstokesproblem}) and
Stokes problem (\ref{auxstoke}), we have
\begin{align}
&a(\tilde{\mathbf u}-{\mathbf u}, {\mathbf v})+b({\bf
v},\tilde{p}-p)+b(\tilde{\mathbf u}-{\mathbf u},q)\nonumber\\
= &r(\lambda_h{\bf
u}_h-\lambda {\mathbf u},{\mathbf v})\nonumber\\
= &\lambda_hr({\mathbf u}_h-{\mathbf u}, {\bf
v})+(\lambda_h-\lambda)r({\mathbf u},{\mathbf v})\nonumber\\
\leq& C(\|{\mathbf u}_h-{\mathbf
u}\|_{-1}+|\lambda_h-\lambda|)\|{\bf v}\|_1.
\end{align}
Then, from (\ref{2.5}), we have
\begin{align}\label{errorofpostproblem}
\|\tilde{\mathbf u}-{\mathbf u}\|_1+\|\tilde{p}-p\|_0&\leq
\sup_{0\neq({\mathbf v},q)\in{\bf V}\times Q}\frac{a(\tilde{\mathbf
u}-{\mathbf u}, {\mathbf v})+b({\mathbf
v},\tilde{p}-p)+b(\tilde{\mathbf u}-{\bf
u},q)}{\|{\mathbf v}\|_1+\|q\|_0}\nonumber\\
&\leq C(\|{\mathbf u}_h-{\mathbf u}\|_{-1}+|\lambda_h-\lambda|).
%&\leq&C\|{\mathbf u}-{\mathbf u}_h\|_0.
\end{align}
From (\ref{errorofpostproblem}) and the Rayleigh quotient expansion
(\ref{rayexpan2}),
% \begin{align}%\label{rayexpan}
% \tilde{\lambda}-\lambda=\frac{a(\tilde{\bf u}-{\bf u},\tilde{\bf
% u}-{\bf u})+2b(\tilde{\bf u}-{\bf u},p-\tilde{p})-\lambda
% s(\tilde{\bf u}-{\bf u},\tilde{\bf u}-{\bf u})}{r(\tilde{\bf
% u},\tilde{\bf u})},
% \end{align}
 we obtain (\cite{ChenJiaXIe})
\begin{eqnarray*}
\tilde{\lambda}-\lambda&\leq&C(\|\tilde{\mathbf u}-{\bf
u}\|_1^2+\|\tilde{\mathbf u}-{\mathbf u}\|_1\|\tilde{p}-p\|_0)\\
&\leq &C(\|{\mathbf u}_h-{\mathbf u}\|_{-1}+|\lambda_h-\lambda|)^2\\
&\leq&C(\|{\mathbf u}_h-{\mathbf u}\|_{-1}^2+|\lambda-\lambda_h|^2).
\end{eqnarray*}
So the proof is complete.
\end{proof}
Based on the result of the convergence rate for the eigenpair
approximation, we can obtain the error estimates:

For the smooth domain, from (\ref{2.23})-(\ref{2.24}) and (\ref{-1_norm_k=1})-(\ref{-1_norm_k>=2})
\begin{align}
\|\tilde{\mathbf u}-{\mathbf u}\|_1+\|\tilde{p}-p\|_0&\leq Ch^2\ \ \ \ \ {\rm for}\ k=1,\\
|\tilde{\lambda}-\lambda|&\leq Ch^4\ \ \ \ \ {\rm for}\ k=1,\\
\|\tilde{\mathbf u}-{\mathbf u}\|_1+\|\tilde{p}-p\|_0&\leq Ch^{k+2}\ \ \ \ \ \ {\rm for}\ k\geq2,\\
|\tilde{\lambda}-\lambda|&\leq Ch^{2k+4}\ \ \ \ \ {\rm for}\ k\geq2.
\end{align}
For the convex polygonal domain, from (\ref{2.20s})-(\ref{2.21s}) and (\ref{-1_norm_s}),
we have
\begin{align}
\|\tilde{\mathbf u}-{\mathbf u}\|_1+\|\tilde{p}-p\|_0&\leq Ch^{2s},\\
|\tilde{\lambda}-\lambda|&\leq Ch^{4s}.
\end{align}
 This means that $(\tilde{\mathbf u},
\tilde{p}, \tilde{\lambda})$ is a better approximation than $({\bf
u}_h, p_h, \lambda_h)$ of the true solution $({\mathbf u}, p,
\lambda)$ of the Stokes eigenvalue problem (\ref{wstokesproblem}).

\section{Multi-level correction scheme}
In this section, we introduce a type of multi-level correction
scheme based on the {\it One Correction Step} defined in Algorithm
\ref{Correction_Step}. This type of correction method can improve
the convergence order after each correction step which is different
from the two-grid method in \cite{XuZhou}.

\begin{algorithm}\label{Multi_Correction}
Multi-level Correction Scheme
\begin{enumerate}
\item Construct a coarse finite element space $V_H$ and solve the
following eigenvalue problem:

Find $(\lambda_H,u_H)\in \mathcal{R}\times V_H$ such that
$b(u_H,u_H)=1$ and
\begin{eqnarray}\label{Initial_Eigen_Problem}
a(u_H,v_H)&=&\lambda_Hb(u_H,v_H),\ \ \ \ \forall v_H\in V_H.
\end{eqnarray}
\item Set $h_1=H$ and construct a series of finer finite element
spaces $V_{h_2},\cdots,V_{h_n}$ such that $\eta_a(H)\gtrsim
\delta_{h_1}(\lambda)\geq \delta_{h_2}(\lambda)\geq\cdots\geq
\delta_{h_n}(\lambda)$.
\item Do $k=0,1,\cdots,n-2$\\
Obtain a new eigenpair approximation
$(\lambda_{h_{k+1}},u_{h_{k+1}})\in \mathcal{R}\times V_{h_{k+1}}$
by a correction step
\begin{eqnarray}
(\lambda_{h_{k+1}},u_{h_{k+1}})=Correction(V_H,\lambda_{h_k},u_{h_k},V_{h_{k+1}}).
\end{eqnarray}
end Do
\item Solve the following source problem:

Find $u_{h_n}\in V_{h_n}$ such that
\begin{eqnarray}\label{aux_problem_h_n}
a(u_{h_n},v_{h_n})&=&\lambda_{h_{n-1}}b(v_{h_{n-1}},v_{h_n}),\ \ \
\forall v_{h_n}\in V_{h_n}.
\end{eqnarray}
Then compute the Rayleigh quotient of $u_{h_n}$
\begin{eqnarray}\label{Rayleigh_quotient_h_n}
\lambda_{h_n} &=&\frac{a(u_{h_n},u_{h_n})}{b(u_{h_n},u_{h_n})}.
\end{eqnarray}
\end{enumerate}
Finally, we obtain an eigenpair approximation
$(\lambda_{h_n},u_{h_n})\in \mathcal{R}\times V_{h_n}$.
\end{algorithm}
\begin{theorem}
After implementing Algorithm \ref{Multi_Correction}, the resultant
eigenpair approximation $(\lambda_{h_n},u_{h_n})$ has the following
error estimate
\begin{eqnarray}
\|u_{h_n}-u\|_a &\lesssim&\varepsilon_{h_n}(\lambda),\label{Multi_Correction_Err_fun}\\
|\lambda_{h_n}-\lambda|&\lesssim&\varepsilon_{h_n}^2(\lambda),\label{Multi_Correction_Err_eigen}
\end{eqnarray}
where
$\varepsilon_{h_n}(\lambda)=\sum\limits_{k=1}^{n}\eta_a(H)^{n-k}\delta_{h_k}(\lambda)$.
\end{theorem}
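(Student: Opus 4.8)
The plan is to argue by induction on the correction levels, using Theorem \ref{Error_Estimate_One_Correction_Theorem} as the single-step engine and then checking that the recursively defined quantity $\varepsilon_{h_{k+1}}(\lambda)=\eta_a(H)\varepsilon_{h_k}(\lambda)+\varepsilon_{h_k}^2(\lambda)+\delta_{h_{k+1}}(\lambda)$ collapses to the stated closed form $\sum_{k=1}^{n}\eta_a(H)^{n-k}\delta_{h_k}(\lambda)$. For the base case I would take the initial eigenpair $(\lambda_{h_1},u_{h_1})=(\lambda_H,u_H)$ produced in Step 1 of Algorithm \ref{Multi_Correction}. The standard finite element theory for eigenvalue problems on $V_H$ supplies $\|u-u_H\|_a\lesssim\delta_{h_1}(\lambda)$, the duality (negative-norm) bound $\|u-u_H\|_{-a}\lesssim\eta_a(H)\|u-u_H\|_a$, and the Rayleigh quotient estimate $|\lambda-\lambda_H|\lesssim\delta_{h_1}^2(\lambda)$. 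Since $\varepsilon_{h_1}(\lambda)=\eta_a(H)^0\delta_{h_1}(\lambda)=\delta_{h_1}(\lambda)$ is exactly the $k=1$ value of the defining sum, the hypotheses (\ref{Error_u_h_1})--(\ref{Error_Eigenvalue_h_1}) of Theorem \ref{Error_Estimate_One_Correction_Theorem} hold at level one.

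For the inductive step, assuming the three estimates hold at level $k$ with bound $\varepsilon_{h_k}(\lambda)$, one application of Theorem \ref{Error_Estimate_One_Correction_Theorem} to the correction step $(\lambda_{h_{k+1}},u_{h_{k+1}})=Correction(V_H,\lambda_{h_k},u_{h_k},V_{h_{k+1}})$ reproduces all three estimates at level $k+1$ with the recursive bound above. The essential simplification is to discard the quadratic term, and for this I would invoke the spacing condition imposed in Step 2, namely $\eta_a(H)\gtrsim\delta_{h_1}(\lambda)\geq\cdots\geq\delta_{h_n}(\lambda)$. Since each $\varepsilon_{h_k}(\lambda)$ is a combination of the $\delta_{h_j}(\lambda)$ weighted by powers of the small factor $\eta_a(H)$, this condition propagates the smallness $\varepsilon_{h_k}(\lambda)\lesssim\eta_a(H)$, whence $\varepsilon_{h_k}^2(\lambda)\lesssim\eta_a(H)\varepsilon_{h_k}(\lambda)$ and the recursion reduces to the linear form $\varepsilon_{h_{k+1}}(\lambda)\lesssim\eta_a(H)\varepsilon_{h_k}(\lambda)+\delta_{h_{k+1}}(\lambda)$.

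With the linearized recursion in hand I would unroll it: repeated substitution together with $\varepsilon_{h_1}(\lambda)=\delta_{h_1}(\lambda)$ yields $\varepsilon_{h_{k+1}}(\lambda)\lesssim\eta_a(H)^{k}\delta_{h_1}(\lambda)+\sum_{j=2}^{k+1}\eta_a(H)^{k+1-j}\delta_{h_j}(\lambda)=\sum_{j=1}^{k+1}\eta_a(H)^{k+1-j}\delta_{h_j}(\lambda)$, which is precisely the claimed closed form, so a formal induction on $k$ closes this part and gives (\ref{Multi_Correction_Err_fun}) up through the penultimate level. The terminal Step 4 is then handled as the first half of a correction step: the source-problem error analysis used in the proof of Theorem \ref{Error_Estimate_One_Correction_Theorem} gives $\|u-u_{h_n}\|_a\lesssim\varepsilon_{h_n}(\lambda)$, and the Rayleigh quotient expansion (Theorem \ref{TS.7}) combined with this energy-norm estimate upgrades it to the squared eigenvalue bound $|\lambda_{h_n}-\lambda|\lesssim\varepsilon_{h_n}^2(\lambda)$, which is (\ref{Multi_Correction_Err_eigen}).

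The main obstacle I anticipate is the bookkeeping around the quadratic term: one must verify that the smallness $\varepsilon_{h_k}(\lambda)\lesssim\eta_a(H)$ is genuinely maintained by the recursion at every level (not merely at level one), so that absorbing $\varepsilon_{h_k}^2(\lambda)$ is legitimate throughout, and that the implicit constants hidden in $\lesssim$ do not accumulate uncontrollably over the $n$ steps. The cleanest remedy is to let the induction hypothesis carry both the closed-form bound and the uniform smallness $\varepsilon_{h_k}(\lambda)\lesssim\eta_a(H)$ simultaneously, and to note that the geometric sum $\sum_{k}\eta_a(H)^{n-k}\delta_{h_k}(\lambda)$ stays within a fixed multiple of $\eta_a(H)$ thanks to the Step 2 ordering; everything else is a direct appeal to Theorem \ref{Error_Estimate_One_Correction_Theorem}.
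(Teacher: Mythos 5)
Your proposal follows essentially the same route as the paper: both use Theorem \ref{Error_Estimate_One_Correction_Theorem} recursively, absorb the quadratic term $\varepsilon_{h_k}^2(\lambda)$ into the linear one via the spacing condition $\eta_a(H)\gtrsim\delta_{h_1}(\lambda)\geq\cdots\geq\delta_{h_n}(\lambda)$ to get the linearized recursion $\varepsilon_{h_{k+1}}(\lambda)\lesssim\eta_a(H)\varepsilon_{h_k}(\lambda)+\delta_{h_{k+1}}(\lambda)$, unroll it to the closed form, and treat the terminal Step 4 by the source-problem half of the correction argument plus the Rayleigh quotient expansion. Your explicit tracking of the uniform smallness $\varepsilon_{h_k}(\lambda)\lesssim\eta_a(H)$ in the induction hypothesis is a sound refinement of a point the paper leaves implicit, but it does not change the structure of the argument.
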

\begin{proof}
From $\eta_a(H)\gtrsim\delta_{h_1}(\lambda)\geq
\delta_{h_2}(\lambda)\geq\cdots\geq \delta_{h_n}(\lambda)$ and
Theorem \ref{Error_Estimate_One_Correction_Theorem}, we have
\begin{eqnarray}
\varepsilon_{h_{k+1}}(\lambda)
&\lesssim&\eta_a(H)\varepsilon_{h_k}(\lambda)+\delta_{h_{k+1}}(\lambda),\
\ \ {\rm for}\ 1\leq k\leq n-2.
\end{eqnarray}
Then by recursive relation, we can obtain
\begin{eqnarray}\label{epsilon_n_1}
\varepsilon_{h_{n-1}}(\lambda)&\lesssim&\eta_a(H)\varepsilon_{h_{n-2}}(\lambda)
+\delta_{h_{n-1}}(\lambda)\nonumber\\
&\lesssim&\eta_a(H)^2\varepsilon_{h_{n-3}}(\lambda)+
\eta_a(H)\delta_{h_{n-2}}(\lambda)+\delta_{h_{n-1}}(\lambda)\nonumber\\
&\lesssim&\sum\limits_{k=1}^{n-1}\eta_a(H)^{n-k-1}\delta_{h_k}(\lambda).
\end{eqnarray}
Based on the proof in Theorem
\ref{Error_Estimate_One_Correction_Theorem} and (\ref{epsilon_n_1}),
the final eigenfunction approximation $u_{h_n}$ has the error
estimate
\begin{eqnarray}\label{Error_u_h_n_Multi_Correction}
\|u_{h_n}-u\|_a&\lesssim&\varepsilon_{h_{n-1}}^2(\lambda)+\eta_a(H)\varepsilon_{h_{n-1}}(\lambda)
+\delta_{h_n}(\lambda)\nonumber\\
&\lesssim& \sum_{k=1}^n\eta_a(H)^{n-k}\delta_{h_k}(\lambda).
\end{eqnarray}
This is the estimate (\ref{Multi_Correction_Err_fun}). From Theorem
\ref{Rayleigh_Quotient_error_theorem} and
(\ref{Error_u_h_n_Multi_Correction}), we can obtain the estimate
(\ref{Multi_Correction_Err_eigen}).
\end{proof}

\section{Postprocessing algorithm}
Theorem \ref{Ts.8} has  only  theoretical value and cannot be used
in practice since the exact solution of the Stokes source problem
(\ref{auxstoke}) is  always not known. In order to make it useful,
we need to get a sufficient accurate approximation of the Stokes
source problem.
 Here we  discuss two possible ways how to obtain the approximation of the
 Stokes source problem (\ref{auxstoke}). The first way is the so-called
``two-grid method" of Xu and Zhou introduced and studied in
\cite{XuZhou} for second order differential equations and integral
equations. The second way proposed and studied by Andreev and
Racheva in \cite{RachevaAndreev} uses the same mesh but higher order
 finite element space.

The first way uses a finer mesh (with mesh size $h^2$) to get an
approximation of $ \tilde{\lambda}$ with an error $O(h^{4k})$ or
$O(h^{4s})$ for $k\leq 2$. The advantage of this approach is that it
uses the same finite element spaces and does not require higher
regularity of the exact eigenfunctions. The second way is based on
the same finite element mesh $\mathcal{T}_h$ but using one order
higher  finite element space. Also, to get an improvement for the
approximation of $\tilde{\lambda}_h$ to the  error $O(h^{4})$  or
$O(h^{2+2\gamma})$ from $O(h^2)$, we need to investigate the
regularity of the Stokes eigenvalue  problem.

We can  treat both ways in the same abstract manner. Namely, let us
introduce the enriched finite element space $\tilde{\bf
V}_h\times\tilde{Q}_h$ such that ${\bf V}_h\times Q_h\subset
\tilde{\bf V}_h\times \tilde{Q}_h\subset(H_0^1(\Omega))^2\times
L_0^2(\Omega)$ and consider the following discrete Stokes problem:

Find $(\tilde{\mathbf u}_h, \tilde{p}_h)\in \tilde{\bf V}_h\times
\tilde{Q}_h $ such that
\begin{equation}\label{dauxstokes}
\left\{\begin{array}{rcl}
a(\tilde{\mathbf u}_h, {\mathbf v}_h)-b({\bf
v}_h, \tilde{p}_h)&=&\lambda_hs({\mathbf u}_h,{\mathbf v}_h)\ \ \ \
\forall {\bf
v}_h\in \tilde{\bf V}_h,\\
b(\tilde{\mathbf u}_h, q_h)+S_h(\tilde{p}_h,q_h)&=&0\ \quad \quad
\quad \quad \ \ \ \ \forall q_h\in \tilde{Q}_h.
\end{array}
\right.
\end{equation}
Here, we suppose that the approximation $(\tilde{\mathbf u}_h,
\tilde{p}_h)\in \tilde{\bf V}_h\times \tilde{Q}_h$ has the following
error estimate:

For a smooth domain
\begin{align}\label{highapprox}
\|\tilde{\mathbf u}-\tilde{\bf
u}_h\|_1+\|\tilde{p}-\tilde{p}_h\|_0 \leq Ch^{k+1}(\|\tilde{\bf
u}\|_{k+2}+\|\tilde{p}\|_{k+1}),
\end{align}
and  for a convex polygonal domain
\begin{align}\label{highapproxs}
\|\tilde{\mathbf u}-\tilde{\bf u}_h\|_1+\|\tilde{p}-\tilde{p}_h\|_0
\leq Ch^{2s}(\|\tilde{\bf u}\|_{s+2}+\|\tilde{p}\|_{s+1}).
\end{align}

 So, we need define the following Rayleigh quotient for $(\tilde{\mathbf u}_h,\tilde{p}_h)$
\begin{align}\label{drayleighaux}
\tilde{\lambda}_h = \frac{a(\tilde{\mathbf u}_h, \tilde{\bf
u}_h)}{r(\tilde{\mathbf u}_h,\tilde{\mathbf u}_h)}.
\end{align}

From the analysis above, we can obtain the following error estimate
for the new eigenpair approximation $(\tilde{\mathbf u}_h, \tilde{p}_h, \tilde{\lambda}_h)\in
\tilde{\bf V}_h\times\tilde{Q}_h\times \mathbb{R}$.
\begin{theorem}\label{Ts.9}
Assume $\tilde{\lambda}_h$ is defined by (\ref{drayleighaux}),
$(\tilde{\mathbf u}_h, \tilde{p}_h)$ is the solution of
(\ref{dauxstokes}) and  $({\mathbf u}, p, \lambda)$ is the true eigenpair
 of the Stokes eigenvalue problem (\ref{wstokesproblem}).
Then we have
\begin{align}
&|\tilde{\lambda}_h-\lambda|\leq C(\|{\mathbf u}-{\bf
u}_h\|_{-1}+|\lambda-\lambda_h|+\|\tilde{\mathbf u}-\tilde{\mathbf
u}_h\|_1
+\|\tilde{p}-\tilde{p}_h\|_0)^2\nonumber\\
&\hskip4cm +CS_h(\tilde{p}_h,\tilde{p}_h),\label{errorofpost1}\\
&\|\tilde{\mathbf u}_h-{\mathbf u}\|_1+\|\tilde{p}_h-p\|_0\leq
C(\|{\mathbf u}-{\mathbf
u}_h\|_{-1}+|\lambda-\lambda_h|+\|\tilde{\bf u}-\tilde{\bf
u}_h\|_1\nonumber\\
&\hskip4cm+\|\tilde{p}-\tilde{p}_h\|_0).\label{errorofpost2}
\end{align}
\end{theorem}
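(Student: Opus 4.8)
The plan is to use Theorem~\ref{Ts.8} as the essential bridge and treat $(\tilde{\mathbf u}_h,\tilde{p}_h)$ as a computable approximation of the exact auxiliary solution $(\tilde{\mathbf u},\tilde{p})$ of problem~(\ref{auxstoke}). First I would establish the eigenfunction estimate~(\ref{errorofpost2}) by the triangle inequality, splitting the error into the continuous-postprocessing part and the discretization part:
\begin{eqnarray*}
\|\tilde{\mathbf u}_h-{\mathbf u}\|_1+\|\tilde{p}_h-p\|_0
&\leq& \|\tilde{\mathbf u}-{\mathbf u}\|_1+\|\tilde{p}-p\|_0
+\|\tilde{\mathbf u}-\tilde{\mathbf u}_h\|_1+\|\tilde{p}-\tilde{p}_h\|_0.
\end{eqnarray*}
The first group is controlled by~(\ref{errorofpostsolution}) of Theorem~\ref{Ts.8}, which gives $\|\tilde{\mathbf u}-{\mathbf u}\|_1+\|\tilde{p}-p\|_0\leq C(\|{\mathbf u}-{\mathbf u}_h\|_{-1}+|\lambda-\lambda_h|)$; the second group is precisely the assumed approximation property of the enriched space~(\ref{highapprox}) or~(\ref{highapproxs}), rephrased as $\|\tilde{\mathbf u}-\tilde{\mathbf u}_h\|_1+\|\tilde{p}-\tilde{p}_h\|_0$. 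Combining these yields~(\ref{errorofpost2}) directly.

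For the eigenvalue estimate~(\ref{errorofpost1}), I would apply the Rayleigh quotient expansion of Theorem~\ref{TS.7}, taking $({\mathbf w},\psi)=(\tilde{\mathbf u}_h,\tilde{p}_h)$ and using the discrete constraint $b(\tilde{\mathbf u}_h,q_h)+S_h(\tilde{p}_h,q_h)=0$ from the second equation of~(\ref{dauxstokes}), which is exactly hypothesis~(\ref{orth}). Then expansion~(\ref{rayexpan}) gives
\begin{eqnarray*}
\tilde{\lambda}_h-\lambda
&=&\frac{a(\tilde{\mathbf u}_h-{\mathbf u},\tilde{\mathbf u}_h-{\mathbf u})
+2b(\tilde{\mathbf u}_h-{\mathbf u},p-\tilde{p}_h)
-\lambda r(\tilde{\mathbf u}_h-{\mathbf u},\tilde{\mathbf u}_h-{\mathbf u})}
{r(\tilde{\mathbf u}_h,\tilde{\mathbf u}_h)}
-\frac{2S_h(\tilde{p}_h,\tilde{p}_h)}{r(\tilde{\mathbf u}_h,\tilde{\mathbf u}_h)}.
\end{eqnarray*}
The numerator of the first fraction is quadratic in the eigenfunction errors $\|\tilde{\mathbf u}_h-{\mathbf u}\|_1$ and $\|p-\tilde{p}_h\|_0$ (using the boundedness of $a$, $b$, $r$ from~(\ref{2.2})--(\ref{2.5})), while $r(\tilde{\mathbf u}_h,\tilde{\mathbf u}_h)$ is bounded below away from zero since $\tilde{\mathbf u}_h\to{\mathbf u}$ and $r({\mathbf u},{\mathbf u})=1$. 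Bounding this quadratic numerator by~(\ref{errorofpost2}) squared produces the $(\cdots)^2$ term in~(\ref{errorofpost1}), and the stabilization remainder $S_h(\tilde{p}_h,\tilde{p}_h)$ carries over as the additive term, exactly as displayed.

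The main obstacle is the careful bookkeeping of the stabilization term $S_h$. Unlike the continuous case treated in Theorem~\ref{Ts.8}, here the discrete problem~(\ref{dauxstokes}) retains the stabilization, so the Rayleigh expansion~(\ref{rayexpan}) — rather than the simpler~(\ref{rayexpan2}) — must be invoked, and the $-2S_h(\tilde{p}_h,\tilde{p}_h)/r(\tilde{\mathbf u}_h,\tilde{\mathbf u}_h)$ term cannot be absorbed into the quadratic error and must instead be tracked explicitly as the separate summand in~(\ref{errorofpost1}). One must also verify that the cross term $2b(\tilde{\mathbf u}_h-{\mathbf u},p-\tilde{p}_h)$ is genuinely quadratic; this is immediate since both factors are first-order-small in the relevant norms. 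Everything else reduces to triangle inequalities and the continuity and coercivity bounds already recorded in Section~2, so no deep new estimate is required beyond the two quoted theorems.
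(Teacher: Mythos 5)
Your proposal is correct and follows essentially the same route as the paper's own proof: the estimate (\ref{errorofpost2}) is obtained by the triangle inequality together with (\ref{errorofpostsolution}) from Theorem \ref{Ts.8}, and (\ref{errorofpost1}) by applying the expansion (\ref{rayexpan}) of Theorem \ref{TS.7} with $({\mathbf w},\psi)=(\tilde{\mathbf u}_h,\tilde{p}_h)$, whose admissibility comes from taking $q_h=\tilde{p}_h$ in the second equation of (\ref{dauxstokes}), and then bounding the resulting quadratic terms by (\ref{errorofpost2}). Your explicit tracking of the stabilization term $S_h(\tilde{p}_h,\tilde{p}_h)$ and of the cross term $2b(\tilde{\mathbf u}_h-{\mathbf u},p-\tilde{p}_h)$ matches the paper's argument, so no further changes are needed.
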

\begin{proof}
First from (\ref{errorofpostsolution}) and the triangle inequality,
we can obtain (\ref{errorofpost2}). Using $b(\tilde{\bf
u}_h,\tilde{p}_h)+S_h(\tilde{p}_h,\tilde{p}_h)=0$ and (\ref{rayexpan}), the following error
estimate holds
\begin{eqnarray*}
|\tilde{\lambda}_h-\lambda|&\leq&C(\|\tilde{\mathbf u}_h-{\bf
u}\|_1^2+\|\tilde{p}_h-p\|_0^2)+S_h(\tilde{p}_h,\tilde{p}_h)\\
&\leq&C(\|{\mathbf u}-{\mathbf
u}_h\|_{-1}+|\lambda-\lambda_h|+\|\tilde{\mathbf u}-\tilde{\bf
u}_h\|_1+\|\tilde{p}-\tilde{p}_h\|_0)^2+S_h(\tilde{p}_h,\tilde{p}_h).
\end{eqnarray*}
This is the desired result (\ref{errorofpost1}) and we complete the proof.
\end{proof}

Now, we can present a practical postprocessing algorithm which can
improve the accuracy of  eigenpair approximations for  the Stokes
eigenvalue problem (\ref{wstokesproblem}).

{\it{\bf Algorithm 1.}

(1)\ Solve the discrete Stokes eigenvalue problem (\ref{dwstokesproblem}) for
$({\mathbf u}_h, p_h, \lambda_h)\in {\bf V}_h\times Q_h\times
\mathbb{R}$.

(2)\ Solve the discrete Stokes source  problem (\ref{dauxstokes})  to get the solution
$(\tilde{\mathbf u}_h, \tilde{p}_h)\in \tilde{\bf V}_h\times
\tilde{Q}_h$.

(3)\ Compute
$$\tilde{\lambda}_h=\frac{a(\tilde{\mathbf u}_h, \tilde{\mathbf u}_h)}{r(\tilde{\mathbf u}_h, \tilde{\mathbf u}_h)}.$$

The pair $(\tilde{\mathbf u}_h, \tilde{p}_h, \tilde{\lambda}_h)$
represent a new (and better than $({\mathbf u}_h, p_h, \lambda_h)$)
approximation to $({\mathbf u}, p, \lambda)$. }

Let us discuss two methods to construct the augmented finite element
space $\tilde{\bf V}_h\times \tilde{Q}_h$ for solving the Stokes
source problem (\ref{dauxstokes}).

{\it Way 1.} (``Two grid method" from \cite{XuZhou}):  In this case,
$\tilde{\bf V}_h\times \tilde{Q}_h$ is the same type of finite
element space as ${\bf V}_h\times Q_h$ on the finer mesh
$\tilde{\mathcal T}_h$ with mesh size $h^{\beta} (\beta>1)$. Here
$\tilde{\mathcal T}_h$ is a finer mesh of $\Omega$ which can be
generated by the refinement just as in the multigrid
method(\cite{XuZhou}).

First, let us  consider the case when the exact eigenfunction is
smooth and have the error estimate (\ref{2.23}) and (\ref{2.24}).
Because the maximum regularity of the solution $(\tilde{\mathbf u},
\tilde{p})$ of Stokes source problem (\ref{auxstoke}) is
$(H^3(\Omega))^2\times H^2(\Omega)$, we need to chose $k\leq 2$. In
this case, we obtain the following improved accuracy for the
eigenpair approximation when $\beta=2$(\cite{XuZhou})
\begin{align}
|\tilde{\lambda}_h-\lambda|&\leq Ch^{4k}\ \ \ \ \ {\rm for}\ k\leq 2,\\
 \|\tilde{\mathbf u}_h-{\bf
u}\|_1+\|\tilde{p}_h-p\|_0&\leq Ch^{2k}\ \ \ \ \ {\rm for}\ k\leq 2.
\end{align}
When $\Omega$ is a convex polygonal domain, with the error estimate
(\ref{2.20s}), (\ref{2.21s}) and Theorem \ref{Ts.9}, we have
%\begin{align}
%|\lambda-\tilde{\lambda}_h|\leq Ch^{4s}.
%\end{align}
%The optimal parameter is chosen to  balance the two terms in the
%above inequality, i.e., $2s+2=2\beta s$. So, we can obtain the
%following error estimate for $\beta=\frac{s+1}{s}$
\begin{align}
|\lambda-\tilde{\lambda}_h|&\leq Ch^{4s},\\
\|\tilde{\mathbf u}_h-{\mathbf
u}\|_1+\|\tilde{p}_h-p\|_0&\leq Ch^{2s},
\end{align}
where we also choose $\beta=2$. From the error estimate above, we can
find that the postprocessing method can obtain the convergence order as same
as solving the Stokes eigenvalue problem on the finer mesh
$\tilde{\mathcal T}_h$. This improvement costs solving the Stokes
source problem on a finer mesh with mesh size $O(h^2)$. This is
better than solving the Stokes eigenvalue problem on the finer mesh
directly, because solving Stokes source problem needs much less
computation than solving Stokes eigenvalue problem.

{\it Way 2.} (``Two space" method from \cite{RachevaAndreev}): In
this case, $\tilde{\bf V}_h\times \tilde{Q}_h$ is defined on the
same mesh $\mathcal{T}_h$ but one order higher than ${\bf V}_h\times
Q_h$. Since the maximum regularity of the solution $(\tilde{\mathbf
u}, \tilde{p})$ for the Stokes source problem (\ref{auxstoke}) is
$(H^3(\Omega))^2\times H^2(\Omega)$, we can only use  the first
order finite element space to solve the original Stokes eigenvalue
problem (\ref{dwstokesproblem}), and solve the Stokes source problem
(\ref{dauxstokes}) in the second order finite element space. So, we
only have the following error estimate for $({\mathbf u}_h, p_h,
\lambda_h)$
\begin{align}
|\lambda-\lambda_h|&\leq Ch^2,\label{errofk1}\\
\|{\mathbf u}-{\mathbf
u}_h\|_1+\|p-p_h\|_0&\leq Ch,\label{errofk1u}\\
\|{\mathbf u}-{\mathbf u}_h\|_{-1}&\leq Ch^2.\label{errofk1u-1_norm}
\end{align}

First, if the domain $\Omega$ is smooth, we have the following error
estimate
\begin{align}
|\lambda-\tilde{\lambda}_h|&\leq Ch^4,\\
\|{\mathbf u}-\tilde{\mathbf u}_h\|_1+\|p-\tilde{p}_h\|_0&\leq
Ch^2.
\end{align}
This is an obvious improvement than (\ref{errofk1}) and
(\ref{errofk1u}).

When $\Omega$ is a convex polygonal domain, from the regularity of
the Stokes source problem and the error estimates (\ref{highapproxs}), (\ref{errorofpost1}) and (\ref{errorofpost2}),
 we have
\begin{align}
|\lambda-\tilde{\lambda}_h|&\leq Ch^{2+2\gamma},\\
\|{\mathbf u}-\tilde{\mathbf u}_h\|_1+\|p-\tilde{p}_h\|_0&\leq
Ch^{1+\gamma}.
\end{align}
This estimate is also an obvious improvement than (\ref{errofk1})
and (\ref{errofk1u}).

The improved error estimate above just cost solving the Stokes
source problem on the same mesh in the second order finite element
space.

\section{Numerical results}
In this section, we give a  numerical example to illustrate the
efficiency of the postprocessing algorithm derived in this paper.
Since we do not know the exact solution of the Stokes eigenvalue
problems, the numerical results only give the behaviors of
eigenvalue approximations by the postprocessing algorithms.

We consider the Stokes eigenvalue problem (\ref{stokesproblem}) on
the domain $\Omega=(0,1)\times(0,1)$. From \cite{Wieners} and
\cite{ChenLin}, we choose  a sufficiently accurate first eigenvalue
approximation $\lambda=52.3446911$ as the first true one.

We first give numerical results of the postprocessing algorithm
which the enriched spaces constructed by refining the current mesh
by the regular way. Here we use the element
$(V_h,D_h)=(P_1,P_{-1}^{\rm disc})$  with
\begin{align*}
P_1 = &\big\{v\in H^1(\Omega): v|_K\in P_1(K),\ \forall
K\in\mathcal{T}_h\big\},\\
P_{-1}^{\rm disc}=&\big\{v\in L^2(\Omega): v|_K\in P_{-1}(K),\
\forall K\in\mathcal{T}_h\big\},
\end{align*}
to solve the Stokes eigenvalue problem (\ref{dwstokesproblem}) and
the Stokes source problem (\ref{dauxstokes}). The numerical results
are shown in Figure \ref{figure_finer_mesh}.
\begin{figure}[htbp]
\centering
\includegraphics[width=8cm,height=7cm]{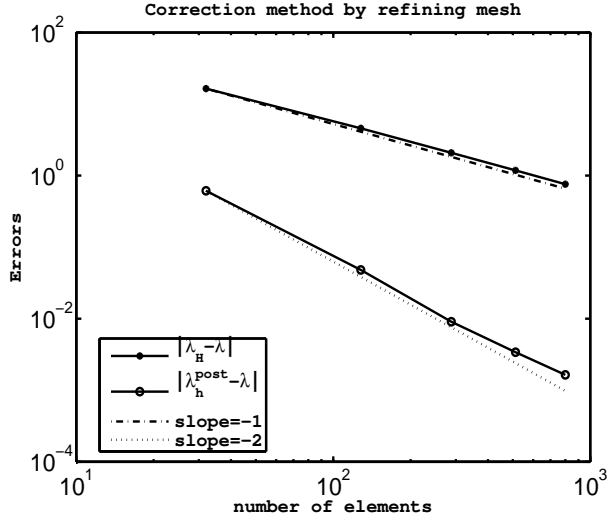}
\caption{Errors for refining mesh method with $\alpha_K=0.1$}\label{figure_finer_mesh}
\end{figure}
%\subsection{Enriched space produced by higher order finite element}
Then we give numerical results of the postprocessing algorithm which
the enriched spaces constructed by one order higher finite element.
We first solve the Stokes eigenvalue problem (\ref{dwstokesproblem})
by the lowest order stabilization element
$(V_h,D_h)=(P_1,P_{-1}^{\rm disc})$ and solve the Stokes source
problem (\ref{dauxstokes}) by second order stabilization element
$(V_h,D_h)=(P_2^+,P_{1}^{\rm disc})$
(\cite{MatthiesSkrzypaczTobiska_Oseen}) with
\begin{align*}
P_2^+ =&\big\{v\in H^1(\Omega) : v|_K\in P_2(K)\oplus \varphi_K
\cdot
P_1(K),\ \forall K\in\mathcal{T}_h\big\},\\
P_{1}^{\rm disc}=&\big\{v\in L^2(\Omega) : v|_K\in P_1(K),\ \forall
K\in\mathcal{T}_h\big\},
\end{align*}
 on the same triangular
meshes, where the bubble function $\varphi_K$ is defined by the
barycenter coordinates $\lambda_{1,K}, \lambda_{2,K}$ and
$\lambda_{3,K}$ on the element $K$ with
$\varphi_K:=\lambda_{1,K}\lambda_{2,K}\lambda_{3,K}$.
%  In order to illustrate the convergence rate, we  define
% the notations:
% \begin{eqnarray*}
% {\rm err}_h&=&\lambda_h-\lambda,\\
% \widetilde{\rm err}_h&=&\tilde{\lambda}_h-\lambda,\\
% R_h&=&\frac{\log(|{\rm err}_{h_1}|/|{\rm
% err}_{h_2}|)}{\log(|h_1/h_2|)},\\
% \tilde{\rm err}_h&=&\tilde{\lambda}_h-\lambda,\\
% \tilde{R}_h&=&\frac{\log(|\widetilde{\rm err}_{h_1}|/|\widetilde{\rm
% err}_{h_2}|)}{\log(|h_1/h_2|)}.
% \end{eqnarray*}

The numerical results are shown in Figure \ref{figure_higher_order}.
\begin{figure}[htbp]
\centering
\includegraphics[width=8cm,height=7cm]{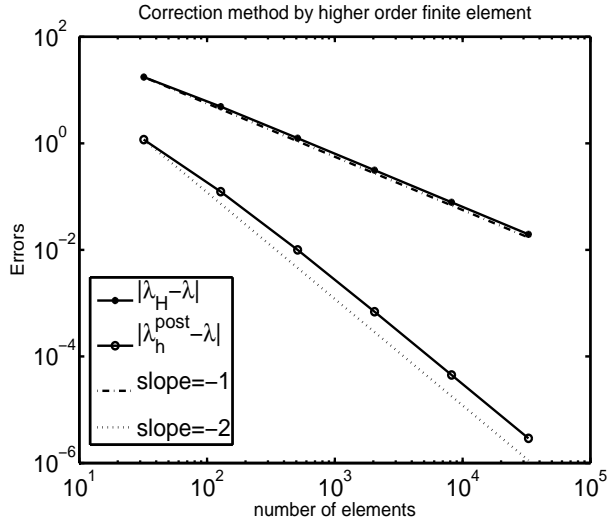}
\caption{Errors for higher order method with
$\alpha_K=0.1$}\label{figure_higher_order}
\end{figure}
From Figures \ref{figure_finer_mesh} and \ref{figure_higher_order}, we can find that the
postprocessing algorithm can improve the accuracy of the eigenvalue
approximations and confirm the theoretical analysis.

\section{Concluding remarks}
In this paper, the LPS method is applied to obtain the
approximations of Stokes eigenvalue problem and a type of
postprocessing method is also proposed to improve the convergence
order for the eigenpair approximation. The theoretical analysis is
given and the corresponding numerical examples are also used to
confirm the analysis. The postprocessing method proposed here can be
coupled with the adaptive mesh refinement in the two-grid method.
The application of LPS method makes the implementation of adaptive
mesh refinement more easily for solving Stokes eigenvalue problems
especially on the meshes with hanging nodes (\cite{Schieweck}).

In the future, we will extend our postprocessing method to the nonsymmetric Stokes eigenvalue problems which is more
general in the study of linearized stability for the Navier-Stokes equations (\cite{HeuvelineRannacher}).

%\section{Concluding remarks}
%\input{acknowledgement}

%\bibliography{lit}
%\bibliographystyle{plain}
\end{document}